\numberwithin{equation}{section}
\newtheorem{theorem}{Theorem}[section]
\newtheorem{corollary}[theorem]{Corollary}
\newtheorem{lemma}[theorem]{Lemma}
\newtheorem{proposition}[theorem]{Proposition}
\newtheorem{remark}[theorem]{Remark}
\newenvironment{proof}[1][Proof]{\textbf{#1.} }{\ \rule{0.5em}{0.5em}}
\begin{document}

\title{Skorohod Equation and Reflected Backward\\
Stochastic Differential Equations}
\date{}
\author{\emph{{\small {\textsc{By Zhongmin QIAN\ and Mingyu XU}}}} \\
%EndAName
\emph{{\small {Oxford University and Chinese Academy Sciences}}}}
\maketitle

\leftskip1truecm \rightskip1truecm \noindent {\textbf{Abstract.} By using
the Skorohod equation we derive an
iteration procedure which allows us to solve a class of reflected backward
stochastic differential equations with non-linear resistance induced by the
reflected local time. In particular, we present a new method to study the
reflected BSDE proposed first by El Karoui et al. \cite{MR1434123}.}

\leftskip0truecm \rightskip0truecm

\vskip6truecm

\noindent \textit{Key words.} Brownian motion, local time, optional dual
projection, reflected BSDE, Skorohod's equation.

\vskip0.5truecm

\noindent \textit{AMS Classification.} 60H10, 60H30, 60J45

\newpage

\section{Introduction}

Backward stochastic differential equations (BSDEs in short), in the linear
case and a special non-linear case, were first introduced by Bismut \cite%
{MR0453161} in the study of the stochastic maximal principle. General
non-linear BSDEs were first considered by Pardoux and Peng \cite{MR1037747},
who proved the existence and uniqueness of adapted solutions, under smooth
square-integrability assumptions on the coefficient and the terminal data,
and when the coefficient $f(t,\omega ,y,z)$ is Lipschitz in $(y,z)$
uniformly in $(t,\omega )$. El Karoui, Kapoudjian, Pardoux, Peng and Quenez
introduced the notion of reflected BSDE {\cite{MR1434123},} with one
continuous lower barrier. More precisely, a solution for such equation
associated with a coefficient $f$, a terminal value $\xi $, a continuous
barrier $S$ which is modelled by a semimartingale, is a triple $(Y,Z,K)$ of
adapted stochastic processes in $\mathbb{R}^{1+d+1}$, which satisfies the
following stochastic integral equation 
\begin{equation}
Y_{t}=\xi +\int_{t}^{T}f(s,Y_{s},Z_{s})ds+K_{T}-K_{t}-\int_{t}^{T}Z_{s}dB_{s}
\label{RBSDEone}
\end{equation}%
for any $0\leq t\leq T$ and $Y\geq S$ a.s., $K$ is non-decreasing
continuous, where $B$ is a $d$-dimensional Brownian motion. The role of $K$
is to push upward the process $Y$ in a minimal way, while to keep it above $S
$. In this sense it satisfies 
\begin{equation}
\int_{0}^{T}(Y_{s}-S_{s})dK_{s}=0\text{.}  \label{RBSDEob}
\end{equation}%
In order to prove the existence and uniqueness of the solution, they used
first a Picard-type iterative procedure, which requires at each step the
solution of an optimal stopping problem. The second approximation is
constructed by penalization of the constraint. From then on, many researches
have been done to relax the assumptions of reflected BSDE in {\cite%
{MR1434123}} based on these two main methods. In \cite{MR1467440}, the case
that the coefficient $f$ are not Lipschitz function with only linear growth
has been considered. In \cite{MR2135157} monotonicity condition is used
instead of the Lipschitz condition, and in \cite{MR1944142}, \cite{LX07}, 
\cite{MR2418253}, these authors studied RBSDE with quadratic increasing
condition. In another direction, different barrier conditions have been
studied, for example the barrier $S$ is not continuous, but only right
continuous with left limits, (cf. \cite{MR2185610}), or is in more general
case, cf. \cite{MR2139035}.

Recently a new type of reflected BSDEs has been introduced by Bank and El
Karoui \cite{MR2044673} by a variation of Skorohod's obstacle problem, which
is named as variant reflected BSDE, which has been generalized by Ma and
Wang in \cite{MR2511615}. The formulation of such equation with an optional
process $X$ (as an upper barrier) 
\begin{equation*}
Y_{t}=X_{T}+\int_{t}^{T}f(s,Y_{s},Z_{s},A_{s})ds-\int_{t}^{T}Z_{s}dB_{s}%
\text{ and }Y\leq X\text{,}
\end{equation*}%
where $A$ is an increasing process, with $A_{0-}=-\infty $, and the flat-off
condition holds $\int_{t}^{T}|Y_{s}-X_{s}|dA_{s}=0$. Here the increasing
process $A$ does not directly act on $Y$ to push the solution downwards such
that $Y_{t}\leq X_{t}$, instead it acts through the generator $f$ which is
decreasing in $A$, like a 'density' of reflecting force. In \cite{MR2511615}%
, it has been proved that the solution in a small-time duration, under some
extra conditions, exists and is unique.

However we still do not know much about the increasing process $K$. In all
these papers mentioned above, there are few results considering the
increasing process $K$. In this paper, we will use the Skorohod equation to
represent the increasing process $K$ in terms of the solution $Y$ and $Z$.
This representation is \textquotedblright explicit\textquotedblright\
showing how the force $K$ pushes the solution $Y$ according to the barrier $%
S $ and is given by 
\begin{eqnarray*}
K_{t} &=&\max \left[ 0,\max_{0\leq s\leq T}\left\{ -\left( \xi
+\int_{s}^{T}f(r,Y_{r},Z_{r})dr-S_{s}-\int_{s}^{T}Z_{r}dB_{r}\right)
\right\} \right] \\
&&-\max \left[ 0,\max_{t\leq s\leq T}\left\{ -\left( \xi
+\int_{s}^{T}f(r,Y_{r},Z_{r})dr-S_{s}-\int_{s}^{T}Z_{r}dB_{r}\right)
\right\} \right] \text{.}
\end{eqnarray*}
Together with the theory of optional dual projections (for details about the
general theory, see for example \cite{HWY}), we construct a new Picard
iteration based on this formula and prove that if $(Y,Z,K)$ is the fixed
point, then it is the solution of reflected BSDE.

With this approach we are able to consider the following type of reflected
BSDE 
\begin{equation}
\text{ \ \ \ }Y_{t}=\xi
+\int_{t}^{T}f(s,Y_{s},Z_{s},K_{s})ds+K_{T}-K_{t}-\int_{t}^{T}Z_{s}dB_{s}
\label{rbsde-0}
\end{equation}%
for \ $t\leq T$, subject to the constrain that 
\begin{equation*}
Y_{t}\geq S_{t}\text{ \ and \ }\int_{0}^{T}(Y_{t}-S_{t})dK_{t}=0\text{.\ \ \
\ }
\end{equation*}%
Here the force is still given by an increasing process $K$, which satisfies
the flat-off condition, but $K$ also appears in the driver $f$ as a
resistance force. If $f$ is decreasing in $K$, then we get an extra force
from the Lebesgue integral. If $f\,$\ is increasing in $K$, then through the
driver, there is a kind of cancellation of the positive force. So in general
case, we can consider this reflected BSDE as an equation with resistance,
given by \ the dependence of the driver on $K$. Since $Y_{t}\geq S_{t}$ has
to be satisfied, and $Y$ is still square integrable, the extra force from
the driver must be controlled is some sense, which is characterized the
magnitude of Lipschitz constant in $K$.

The paper is organized as following. We first recall in Section 2 the
Tanaka's formula and Skorohod's equation to give various formulate for the
increasing process $K$. In section 3, we introduce a type of reflected BSDEs
with resistance and prove the existence and uniqueness of the solution. In
section 4, the uniqueness and some properties of the solution we have
constructed are studied.

\section{Local and reflected local times}

Let $B=(B^{1},B^{2},\cdots ,B^{d})$ be a Brownian motion of dimension $d$ on
a complete probability space $(\Omega ,\mathcal{F},\mathbb{P})$, and $(%
\mathcal{F}_{t})_{t\geq 0}$ be the Brownian motion filtration associated
with $B$. Let $T>0$ be a terminal time. \ Denote by $\mathcal{P}$ the $%
\sigma $-algebra of predictable sets on $[0,T]\times \Omega $ with respect
to the filtration $(\mathcal{F}_{t})_{t\geq 0}$.

For simplicity, we introduce the following spaces of random processes over $%
(\Omega ,\mathcal{F},\mathcal{F}_{t},\mathbb{P})$. $\mathcal{L}^{2}(\mathcal{%
F}_{t})$ denotes the space of all $\mathcal{F}_{t}$-measurable, real random
variable such that $\mathbb{E}(|\eta |^{2})<\infty $, $\mathcal{M}^{2}$
denotes the space of \ (continuous) square-integrable martingales (up to
time $T$), and $\mathcal{H}_{d}^{2}(0,T)$ is the space of $\mathbb{R}^{d}$%
-valued predictable process $\psi $ such that $\mathbb{E}\int_{0}^{T}\left%
\vert \psi (t)\right\vert ^{2}dt<\infty \}$. $\mathcal{S}^{2}(0,T)$ is the
space of all continuous semimartingales (with running time $[0,T]$) over $%
(\Omega ,\mathcal{F},\mathcal{F}_{t},\mathbb{P})$, and $\mathcal{A}^{2}(0,T)$
the space of all $\mathcal{F}_{T}$-measurable, continuous and increasing
processes with initial zero such that $\mathbb{E}(K_{T}^{2})<\infty $.

If $K\in \mathcal{A}^{2}(0,T)$, then the optional projection $K^{\flat }$
(which is a right continuous modification of $t\rightarrow \mathbb{E}(K_{t}|%
\mathcal{F}_{t})$) and the dual optional projection $K^{o}$ of $K$ exist.
The dual optional projection $K^{o}$ is continuous and increasing with
initial zero, while the optional projection $K^{\flat }$ is right continuous
but not necessary increasing. Their difference $N=K^{\flat }-K^{o}$ is a
martingale which must be continuous. Hence the optional projection $K^{\flat
}$ is continuous as well. Moreover $K$ to $K^{\flat }$ is a contraction in $%
L^{p}$-norm.

The reflected backward stochastic differential equation (RBSDE or reflected
BSDE in short) considered in El Karoui, et al. \cite{MR1434123} is a
stochastic integral equation 
\begin{equation}
\text{ \ \ \ }Y_{t}=\xi
+\int_{t}^{T}f(s,Y_{s},Z_{s})ds+K_{T}-K_{t}-\int_{t}^{T}Z_{s}dB_{s}\text{ \ }
\label{rbsde-1}
\end{equation}%
for \ $t\leq T$, subject to the constrain that 
\begin{equation}
Y_{t}\geq S_{t}\text{ \ and \ }\int_{0}^{T}(Y_{t}-S_{t})dK_{t}=0\text{, \ \
\ \ \ \ \ }  \label{cons-01}
\end{equation}%
where $S$ is a continuous semimartingale such that $\sup_{t\leq T}S_{t}^{+}$
is square integrable, and $\xi \in \mathcal{L}^{2}(\mathcal{F}_{T})$, which
are given data. $f$ is called the (non-linear) driver of the reflected BSDE (%
\ref{rbsde-1}), which is global Lipschitz in $(Y,Z)$ uniformly in $t\in
\lbrack 0,T]$ and $\omega \in \Omega $.

By a solution $(Y,Z,K)$ of the terminal problem (\ref{rbsde-1})-\ref{cons-01}%
) we mean that $Y\in \mathcal{S}^{2}(0,T)$, $K\in \mathcal{A}^{2}(0,T)$ and $%
K$ is optional, and $Z\in \mathcal{H}_{d}^{2}(0,T)$, which satisfies the
stochastic integral equations (\ref{rbsde-1}) with time $t$ running from $0$
to $T$.

The constrain (\ref{cons-01}) implies that $\xi -S_{T}$ must be
non-negative, and the second condition in (\ref{cons-01}) says that $K$ has
no charge on $\{t\in \lbrack 0,T]:Y_{t}>S_{t}\}$ and increases only on $%
\{t:Y_{t}=S_{t}\}$, which is equivalent to say that $\int_{0}^{t}1_{%
\{Y_{s}-S_{s}=0\}}dK_{s}=K_{t}$ for $0\leq t\leq T$.

Since 
\begin{equation}
Y_{0}=\xi +\int_{0}^{T}f(s,Y_{s},Z_{s})ds+K_{T}-\int_{0}^{T}Z_{s}dB_{s}
\label{y0-eq}
\end{equation}%
so that 
\begin{equation}
Y_{t}=Y_{0}-\int_{0}^{t}f(s,Y_{s},Z_{s})ds-K_{t}+\int_{0}^{t}Z_{s}dB_{s}
\label{yt-eq}
\end{equation}%
and therefore the martingale part of $Y$ is $M_{t}=\int_{0}^{t}Z_{s}dB_{s}$.

Our first task is to give two representations of the increasing process $K$
in terms of sample paths $Y$ and $Z$, by using the Tanaka formula and the
Skorohod equation respectively. These representation formulate are well
known in stochastic analysis, we however employ them to the study of
existence and uniqueness for a class of reflected backward stochastic
differential equations with resistance.

\subsection{Tanaka's formula}

We wish to interpret the increasing process $K$ as a time-reversed local
time, so that $K$ will be called the reflected local time of $Y$ at $S$. To
this end, we introduce the following notation: if $X$ is a continuous
semimartingale, then $L^{X}$ denotes the local time of the continuous
semimartingale $X-S$ at zero. That is, $L^{X}$ is defined by the Tanaka
formula 
\begin{equation}
|X_{t}-S_{t}|=|X_{0}-S_{0}|+\int_{0}^{t}\text{sgn}%
(X_{s}-S_{s})d(X_{s}-S_{s})+2L_{t}^{X}  \label{x-loc}
\end{equation}%
where sgn$(r)=-1$ for $r\leq 0$ and sgn$(r)=1$ for $r>0$. According Tanaka's
formula%
\begin{equation}
(X_{t}-S_{t})^{-}=(X_{0}-S_{0})^{-}-\int_{0}^{t}1_{\{X_{s}\leq
S_{s}\}}d(X_{s}-S_{s})+L_{t}^{X}\text{.}  \label{x-loc2}
\end{equation}

\begin{proposition}
\label{prop-01}Suppose that $Y_{t}=\int_{0}^{t}Z_{s}dB_{s}+V_{t}$ and $%
S_{t}=\int_{0}^{t}\sigma _{s}dB_{s}+A_{t}$ are two continuous
semimartingales, where $V$ and $A$ are continuous, adapted with finite
variations, and suppose that $Y\geq S$, then%
\begin{equation}
L_{t}^{Y}=\int_{0}^{t}1_{\{Y_{s}=S_{s}\}}d(V_{s}-A_{s})  \label{j-re-02}
\end{equation}%
and%
\begin{equation}
1_{\{Y_{t}=S_{t}\}}(Z_{t}-\sigma _{t})=0\text{.}  \label{j-re-03}
\end{equation}
\end{proposition}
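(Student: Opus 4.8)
The plan is to apply Tanaka's formula (\ref{x-loc2}) with the continuous semimartingale taken to be $Y$ itself, use the hypothesis $Y\geq S$ to annihilate the left-hand side, and then split the resulting identity into its martingale and finite-variation parts.

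Concretely, the process $Y-S$ is a continuous semimartingale with canonical decomposition
\[
Y_t-S_t=\int_0^t (Z_s-\sigma_s)\,dB_s+(V_t-A_t),
\]
its continuous martingale part being the Brownian integral and its finite-variation part $V-A$. Since $Y\geq S$ we have $(Y_t-S_t)^-=0$ for all $t$ and $(Y_0-S_0)^-=0$, while the indicator set satisfies $\{Y_s\leq S_s\}=\{Y_s=S_s\}$. Substituting $X=Y$ into (\ref{x-loc2}) and rearranging therefore gives
\[
L_t^{Y}=\int_0^t 1_{\{Y_s=S_s\}}(Z_s-\sigma_s)\,dB_s+\int_0^t 1_{\{Y_s=S_s\}}\,d(V_s-A_s).
\]

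The decisive step, and the one I expect to carry the whole argument, is a finite-variation/local-martingale dichotomy. The first term on the right, call it $N_t$, is a continuous local martingale vanishing at $t=0$; the left-hand side $L^{Y}$ is increasing and the last term is of finite variation, so $N=L^{Y}-\int_0^\cdot 1_{\{Y_s=S_s\}}\,d(V_s-A_s)$ is at once a continuous local martingale and a process of finite variation with $N_0=0$. Such a process is identically zero, hence $N\equiv 0$; this is precisely (\ref{j-re-02}).

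To obtain (\ref{j-re-03}) I would then read off the quadratic variation of $N$. From $N\equiv 0$ we get $\langle N\rangle\equiv 0$, that is,
\[
\int_0^t 1_{\{Y_s=S_s\}}\,|Z_s-\sigma_s|^2\,ds=0\quad\text{for every }t,\text{ almost surely},
\]
and hence $1_{\{Y_s=S_s\}}(Z_s-\sigma_s)=0$ for $ds\,d\mathbb{P}$-almost every $(s,\omega)$, which is the stated identity. Beyond the dichotomy above the argument is only bookkeeping: the identification $\{Y_s\leq S_s\}=\{Y_s=S_s\}$ and the vanishing of the two boundary terms all come directly from $Y\geq S$.
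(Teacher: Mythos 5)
Your proof is correct and follows essentially the same route as the paper's: both apply Tanaka's formula (\ref{x-loc2}) to $Y-S$, use $Y\geq S$ to kill the left-hand side and reduce the indicator to $\{Y_s=S_s\}$, and then invoke the fact that a continuous local martingale of finite variation vanishing at $0$ is identically zero. The only cosmetic difference is the order of conclusions (the paper deduces (\ref{j-re-03}) first and then (\ref{j-re-02}), you do the reverse via the quadratic variation of the vanishing martingale part), and your write-up actually spells out the dichotomy argument that the paper compresses into one sentence.
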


\begin{proof}
Since $Y-S\geq 0$, so that $(Y-S)^{-}=0$. According to the Tanaka formula%
\begin{equation*}
(Y_{t}-S_{t})^{-}=(Y_{0}-S_{0})^{-}-\int_{0}^{t}1_{\{Y_{s}\leq
S_{s}\}}d(Y_{s}-S_{s})+L_{t}^{Y}
\end{equation*}%
so that%
\begin{eqnarray*}
L_{t}^{Y} &=&\int_{0}^{t}1_{\{Y_{s}=S_{s}\}}d(Y_{s}-S_{s}) \\
&=&\int_{0}^{t}1_{\{Y_{s}=S_{s}\}}d(V_{s}-A_{s})+\int_{0}^{t}1_{%
\{Y_{s}=S_{s}\}}(Z_{s}-\sigma _{s})dB_{s}\text{.}
\end{eqnarray*}%
The martingale part must be zero as $L$ is increasing, which yields (\ref%
{j-re-03}), and therefore (\ref{j-re-02}) follows as well.
\end{proof}

The following lemma demonstrates in some sense $K$ is the time inverse of
the local time of $Y-S$.

\begin{corollary}
\label{coro-001}Assume that $Y\geq S$ are two continuous semimartingale, 
\begin{equation}
\text{ \ \ \ }Y_{t}=Y_{0}-\int_{0}^{t}f_{s}ds-K_{t}+\int_{0}^{t}Z_{s}dB_{s}
\label{we-05}
\end{equation}
and $S=N+A$ ($N$ is the martingale part of $S$ and $A$ is its variation
part), where $(f_{t})_{t\in \lbrack 0,T]}$ is optional and $\mathbb{E}%
\int_{0}^{T}f_{s}^{2}ds<\infty $, $Z\in \mathcal{H}^{d}(0,T)$, $Y_{0}\in 
\mathcal{L}^{2}(\mathcal{F}_{0})$, $K\in \mathcal{A}^{2}(0,T)$ is adapted,
such that $\int_{0}^{t}1_{\{Y_{s}=S_{s}\}}dK_{s}=K_{t}$. Then%
\begin{equation}
K_{t}=-\int_{0}^{t}1_{\{Y_{s}=S_{s}\}}f_{s}ds-\int_{0}^{t}1_{\{Y_{s}=S_{s}%
\}}dA_{s}-L_{t}^{Y}  \label{we-06}
\end{equation}%
and%
\begin{eqnarray}
K_{t}
&=&-\int_{0}^{t}1_{\{Y_{s}=S_{s}\}}f_{s}ds-\int_{0}^{t}1_{\{Y_{s}=S_{s}%
\}}dY_{s}  \notag \\
&&+\int_{0}^{t}1_{\{Y_{s}=S_{s}\}}dN_{s}\text{,}  \label{re-p1}
\end{eqnarray}%
where $N$ is the martingale part of $S$.
\end{corollary}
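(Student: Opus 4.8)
The plan is to read off the semimartingale decomposition of $Y$ from (\ref{we-05}) and feed it directly into Proposition \ref{prop-01}. From (\ref{we-05}) the martingale part of $Y$ is $\int_0^t Z_s\,dB_s$ and its finite-variation part is $V_t = Y_0 - \int_0^t f_s\,ds - K_t$, so that $dV_s = -f_s\,ds - dK_s$; this $V$ is continuous and of finite variation because $\mathbb{E}\int_0^T f_s^2\,ds < \infty$ and $K \in \mathcal{A}^2(0,T)$. Since $Y \geq S$ and $S = N + A$ with $N$ the martingale part (which in the Brownian filtration may be written $N_t = \int_0^t \sigma_s\,dB_s$), Proposition \ref{prop-01} applies and supplies both the local-time formula $L_t^Y = \int_0^t 1_{\{Y_s=S_s\}}\,d(V_s - A_s)$ and the coincidence identity $1_{\{Y_s=S_s\}}(Z_s - \sigma_s) = 0$.

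To obtain (\ref{we-06}) I would substitute $dV_s = -f_s\,ds - dK_s$ into the local-time formula, giving
\begin{equation*}
L_t^Y = -\int_0^t 1_{\{Y_s=S_s\}} f_s\,ds - \int_0^t 1_{\{Y_s=S_s\}}\,dK_s - \int_0^t 1_{\{Y_s=S_s\}}\,dA_s .
\end{equation*}
The hypothesis that $K$ is carried by the coincidence set, $\int_0^t 1_{\{Y_s=S_s\}}\,dK_s = K_t$, collapses the middle term to $K_t$; rearranging then yields (\ref{we-06}) directly.

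For (\ref{re-p1}) I would instead integrate the full differential $dY_s = Z_s\,dB_s - f_s\,ds - dK_s$ against $1_{\{Y_s=S_s\}}$. The key point is that the stochastic-integral term localizes onto $N$: by (\ref{j-re-03}) one has $1_{\{Y_s=S_s\}} Z_s\,dB_s = 1_{\{Y_s=S_s\}}\sigma_s\,dB_s = 1_{\{Y_s=S_s\}}\,dN_s$. Hence
\begin{equation*}
\int_0^t 1_{\{Y_s=S_s\}}\,dY_s = \int_0^t 1_{\{Y_s=S_s\}}\,dN_s - \int_0^t 1_{\{Y_s=S_s\}} f_s\,ds - K_t,
\end{equation*}
where I have used the flat-off condition once more on the $dK$ term; solving for $K_t$ produces (\ref{re-p1}).

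Essentially everything here is bookkeeping once Proposition \ref{prop-01} is in hand, so I expect no serious obstacle. The only genuine step is the localization $1_{\{Y_s=S_s\}} Z_s\,dB_s = 1_{\{Y_s=S_s\}}\,dN_s$, which rests on representing the martingale part $N$ of $S$ as $\int \sigma\,dB$ and invoking (\ref{j-re-03}); the only routine verification is that (\ref{we-05}) genuinely exhibits $Y$ as a continuous semimartingale of the form required by Proposition \ref{prop-01}, which follows from the stated integrability of $f$ and $K$.
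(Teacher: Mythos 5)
Your proposal is correct and is exactly the argument the paper intends: the corollary is stated as an immediate consequence of Proposition \ref{prop-01}, obtained by taking $V_{t}=Y_{0}-\int_{0}^{t}f_{s}ds-K_{t}$ as the finite-variation part of $Y$, invoking both (\ref{j-re-02}) and the coincidence identity (\ref{j-re-03}) (the latter justified via the martingale representation $N_{t}=N_{0}+\int_{0}^{t}\sigma_{s}dB_{s}$ in the Brownian filtration), and then collapsing $\int_{0}^{t}1_{\{Y_{s}=S_{s}\}}dK_{s}$ to $K_{t}$ by the flat-off hypothesis. Your bookkeeping for both (\ref{we-06}) and (\ref{re-p1}) is accurate, so there is nothing to add.
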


\subsection{Skorohod's equation}

The most useful form for $K$ to our study is however the representation
formula given by the Skorohod equation.

Again we assume that $Y\geq S$ are two continuous semimartingales, and $Y$
is given by (\ref{we-05}). Let $y_{t}=Y_{T-t}-S_{T-t}$, $L_{t}=K_{T}-K_{T-t}$
and 
\begin{equation}
x_{t}=\int_{T-t}^{T}f_{s}ds-\int_{T-t}^{T}Z_{s}dB_{s}+S_{T}-S_{T-t}\text{.}
\label{xt-eq}
\end{equation}%
Then $L_{0}=0$, $t\rightarrow L_{t}$ increases only on $\{t:y_{t}=0\}$, $%
y_{t}\geq 0$, $\eta =Y_{T}-S_{T}\geq 0$, $x_{0}=0$, and 
\begin{equation}
y_{t}=\eta +x_{t}+L_{t}\text{ .}  \label{yt-eq1}
\end{equation}

According to Skorohod's equation (Lemma 6.14, page 210 in \cite{MR1121940},
with the convention that $x_{t}=x_{T}$, $y_{t}=y_{T}$ and $L_{t}=L_{T}$ for $%
t\geq T$) 
\begin{equation}
L_{t}=\max \left[ 0,\max_{0\leq s\leq t}\left\{ -\left( \eta +x_{s}\right)
\right\} \right] \text{, \ }\forall t\geq 0\text{.}  \label{skorohod1}
\end{equation}%
That is 
\begin{equation}
L_{t}=\max \left[ 0,\max_{T-t\leq s\leq T}\left\{ -\left(
Y_{T}+\int_{s}^{T}f_{r}dr-S_{s}-\int_{s}^{T}Z_{r}dB_{r}\right) \right\} %
\right]  \label{local-r1}
\end{equation}%
for $0\leq t\leq T$, and we may recover $K_{t}=L_{T}-L_{T-t}$ to obtain 
\begin{eqnarray}
K_{t} &=&\max \left[ 0,\max_{0\leq s\leq T}\left\{ -\left(
Y_{T}+\int_{s}^{T}f_{r}-S_{s}-\int_{s}^{T}Z_{r}dB_{r}\right) \right\} \right]
\notag \\
&&-\max \left[ 0,\max_{t\leq s\leq T}\left\{ -\left(
Y_{T}+\int_{s}^{T}f_{r}dr-S_{s}-\int_{s}^{T}Z_{r}dB_{r}\right) \right\} %
\right] \text{.}  \label{local-r2}
\end{eqnarray}

\section{Reflected BSDE with resistance}

The representation formula (\ref{local-r2}) for the reflected local time $K$
may be used to study a class of reflected backward stochastic differential
equations with non-linear resistance caused by the reflected local time $K$.
In this paper we study the following stochastic integral equation 
\begin{equation}
\text{ \ \ \ }Y_{t}=\xi
+\int_{t}^{T}f(s,Y_{s},Z_{s},K_{s})ds+K_{T}-K_{t}-\int_{t}^{T}Z_{s}dB_{s}%
\text{ \ }  \label{rbsde-ge}
\end{equation}%
for \ $t\leq T$, subject to the constrain that 
\begin{equation}
Y_{t}\geq S_{t}\text{ \ and \ }\int_{0}^{T}(Y_{t}-S_{t})dK_{t}=0\text{,\ \ \
\ }  \label{cons-02}
\end{equation}%
$S$ is a continuous semimartingale such that $\sup_{t\leq T}S_{t}^{+}$ is
square integrable, and $\xi \in \mathcal{L}^{2}(\mathcal{F}_{T})$, which are
given data.

Assume that $f$ is global Lipschitz continuous 
\begin{equation}
|f(s,y,z,k)-f(s,y^{\prime },z^{\prime },k^{\prime })|\leq C_{1}(|y-y^{\prime
}|+|z-z^{\prime }|)+C_{2}|k-k^{\prime }|  \label{lip-01}
\end{equation}%
for all $y,y^{\prime },z,z^{\prime },k,k^{\prime }$, where $C_{1}$ and $%
C_{2} $ are two constants, and $\mathbb{E}\int_{0}^{T}f^{0}(t)^{2}dt<\infty $%
, where%
\begin{equation}
f^{0}(t)\equiv f(t,0,0,0)\text{.}  \label{f-0}
\end{equation}

By a solution triple $(Y,Z,K)$ of the terminal problem (\ref{rbsde-ge}) we
mean that $Y\in \mathcal{S}^{2}(0,T)$, $K\in \mathcal{A}^{2}(0,T)$ and $K$
is optional, and $Z\in \mathcal{H}_{d}^{2}(0,T)$, which satisfies the
stochastic integral equations (\ref{rbsde-ge}) with time $t$ running from $0$
to $T$.

An additional feature over the reflected BSDE\ (\ref{rbsde-1}) is the
dependence of the driver with respect to the reflected local time $K$. The
integral equation (\ref{rbsde-ge}) is not local in time, since $K$ will be
path dependent over the whole range $[0,T]$. This is the reason why we have
to require the Lipschitz constant $C_{2}$ in (\ref{lip-01}) to be small.

According to (\ref{local-r2}), if $(Y,Z,K)$ is a solution of the problem (%
\ref{rbsde-ge})-(\ref{cons-02}), then we must have 
\begin{eqnarray}
K_{t} &=&\max \left[ 0,\max_{0\leq s\leq T}\left\{ -\left( \xi
-S_{s}+\int_{s}^{T}f(r,Y_{r},Z_{r},K_{r})dr-\int_{s}^{T}Z_{r}dB_{r}\right)
\right\} \right]  \notag \\
&&-\max \left[ 0,\max_{t\leq s\leq T}\left\{ -\left( \xi
-S_{s}+\int_{s}^{T}f(r,Y_{r},Z_{r},K_{r})dr-\int_{s}^{T}Z_{r}dB_{r}\right)
\right\} \right]  \label{k-local-l1}
\end{eqnarray}%
which is already noticed in \cite{MR1434123}. What is new here is the use of
this formula to build a proper Picard iteration associated with (\ref%
{rbsde-ge}), which is described in the following section.

\begin{remark}
For the reflected BSDE with resistance (\ref{rbsde-ge}), we can still write
the solution $Y$ as the value process of\ an optimal stopping problem
following the same arguments of Proposition 2.3 in \cite{MR1434123},
noticing that $K$ is continuous in $t$. However, here it is not a standard
optimal stopping problem, and we can not get the solution given by Snell
envelope, which is the essential step of Picard's iteration used in \cite%
{MR1434123}.
\end{remark}

\subsection{Constructing Picard's iteration}

We show the existence of a unique solution by constructing an appropriate
(non)-linear mapping defined by the stochastic integral equation (\ref%
{rbsde-ge}), so that the unique solution is given as its fixed point.

Let us develop the iteration procedure as following. Suppose $Y\in \mathcal{S%
}^{2}(0,T)$, $Z\in \mathcal{H}_{d}^{2}(0,T)$ and $K\in \mathcal{A}^{2}(0,T)$%
, $Y\geq S$. After iteration once we will obtain 
\begin{equation*}
(\tilde{Y},\tilde{Z},\tilde{K})\in \mathcal{S}^{2}(0,T)\times \mathcal{H}%
_{d}^{2}(0,T)\times \mathcal{A}^{2}(0,T)\text{,}
\end{equation*}%
and $\tilde{Z}.B$ is the martingale part of $\tilde{Y}$. Thus we can assume
from the beginning, without losing generality, that $M_{t}-M_{0}=%
\int_{0}^{t}Z_{s}dB_{s}$ is the martingale part of $Y$, although we prefer
consider three processes $Y,Z,K$ as independent variables. According to (\ref%
{k-local-l1}) we first define 
\begin{eqnarray}
\tilde{K}_{t} &=&\max \left[ 0,\max_{0\leq s\leq T}\left\{ -\left( \xi
+\int_{s}^{T}f(r,Y_{r},Z_{r},K_{r}^{\flat
})dr-S_{s}-\int_{s}^{T}Z_{r}dB_{r}\right) \right\} \right]  \notag \\
&&-\max \left[ 0,\max_{t\leq s\leq T}\left\{ -\left( \xi
+\int_{s}^{T}f(r,Y_{r},Z_{r},K_{r}^{\flat
})dr-S_{s}-\int_{s}^{T}Z_{r}dB_{r}\right) \right\} \right]  \label{ktilde-01}
\end{eqnarray}%
where $K$ is replaced by the optional projection of $K$, as we do not assume 
$K$ is optional, but we want to ensure the arguments in the driver $f$ are
optional.

We are going to define $\tilde{M}$ and $\tilde{Y}$. The natural way to
define $\tilde{Y}$ is to take the right-hand side of (\ref{rbsde-ge}) which
is 
\begin{equation}
\hat{Y}_{t}=\xi +\int_{t}^{T}f(s,Y_{s},Z_{s},K_{s}^{\flat })ds+\tilde{K}_{T}-%
\tilde{K}_{t}-\int_{t}^{T}Z_{s}dB_{s}\text{.}  \label{yhat-eq1}
\end{equation}%
$\hat{Y}$ is however not not necessary adapted. Therefore we define $\tilde{Y%
}$ to be its optional projection $\hat{Y}^{\flat }$. That is, 
\begin{eqnarray}
\tilde{Y}_{t} &=&\mathbb{E}\left\{ \left. \xi
+\int_{t}^{T}f(s,Y_{s},Z_{s},K_{s}^{\flat })ds+\tilde{K}_{T}-\tilde{K}%
_{t}-\int_{t}^{T}Z_{s}dB_{s}\right\vert \mathcal{F}_{t}\right\}  \notag \\
&=&\mathbb{E}\left\{ \left. \xi +\int_{t}^{T}f(s,Y_{s},Z_{s},K_{s}^{\flat
})ds+\tilde{K}_{T}-\tilde{K}_{t}\right\vert \mathcal{F}_{t}\right\} \text{.}
\label{ytilde-eq1}
\end{eqnarray}%
According to Skorohod's equation, $\hat{Y}\geq S$, so is $\tilde{Y}$.
Therefore the mapping $Y\rightarrow \tilde{Y}$ preserves the constraint $%
\tilde{Y}\geq S$. Moreover $\tilde{K}$ increases only on $\{t:\hat{Y}%
_{t}-S_{t}=0\}$, which does not necessarily coincide with level set $\{t:%
\tilde{Y}_{t}-S_{t}=0\}$.

On the other hand, according to (\ref{ytilde-eq1}), the\ semimartingale
decomposition of $\tilde{Y}$ is given by 
\begin{eqnarray}
\tilde{Y}_{t} &=&\mathbb{E}\left\{ \left. \xi +\tilde{K}_{T}+%
\int_{0}^{T}f(s,Y_{s},Z_{s},K_{s}^{\flat })ds\right\vert \mathcal{F}%
_{t}\right\} -\tilde{N}_{t}  \notag \\
&&-\tilde{K}_{t}^{o}-\int_{0}^{t}f(s,Y_{s},Z_{s},K_{s}^{\flat })ds
\label{y-tilded-de-01}
\end{eqnarray}%
where $\tilde{N}_{t}=\tilde{K}_{t}^{\flat }-\tilde{K}_{t}^{o}$ is a
continuous martingale. Therefore the martingale part of $\tilde{Y}$ is given
by 
\begin{equation}
\tilde{M}_{t}=\mathbb{E}\left\{ \left. \xi +\tilde{K}_{T}+%
\int_{0}^{T}f(s,Y_{s},Z_{s},K_{s}^{\flat })ds\right\vert \mathcal{F}%
_{t}\right\} -\tilde{N}_{t}  \label{mtilde-01}
\end{equation}%
which in turn defines the density predictable process $\tilde{Z}$ by It\^{o}%
's martingale representation $\tilde{M}_{t}-\tilde{M}_{0}=\int_{0}^{t}\tilde{%
Z}_{s}.dB_{s}$, so that%
\begin{equation}
\tilde{Y}_{t}=\xi +\int_{t}^{T}f(s,Y_{s},Z_{s},K_{s}^{\flat })ds+\tilde{K}%
_{T}^{o}-\tilde{K}_{t}^{o}-\int_{t}^{T}\tilde{Z}_{s}.dB_{s}\text{.}
\label{t-y-03}
\end{equation}%
It is clear that from the definition and the Lipschitz condition (\ref%
{lip-01}) 
\begin{equation*}
(\tilde{Y},\tilde{Z},\tilde{K})\in \mathcal{S}^{2}(0,T)\times \mathcal{H}%
_{d}^{2}(0,T)\times \mathcal{A}^{2}(0,T)\text{.}
\end{equation*}%
The mapping $\mathfrak{L}:(Y,Z,K)\rightarrow (\tilde{Y},\tilde{Z},\tilde{K})$
is thus well defined.

It seems reasonable to have the optional \emph{dual} projection $K^{o}$ in
place of the optional projection $K^{\flat }$ in defining $\tilde{Y}$ by (%
\ref{y-tilded-de-01}). The reason we prefer the optional projection lies in
the fact that $X\rightarrow X^{\flat }$ is a contraction in $L^{p}$-space,
but $X\rightarrow X^{o}$ is not.

\begin{proposition}
\label{prop-03}If $(Y,Z,K)$ is a fixed point of $\mathfrak{L}$, then $%
(Y,Z,K) $ is a solution the reflected BSDE (\ref{rbsde-ge})-(\ref{cons-02}).
\end{proposition}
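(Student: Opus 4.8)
The plan is to verify at a fixed point $(Y,Z,K)=\mathfrak{L}(Y,Z,K)$ the three requirements of a solution of (\ref{rbsde-ge})--(\ref{cons-02}): the integral equation, the constraint $Y\geq S$, and the flat-off condition. At the fixed point we have $\tilde{Y}=Y$, $\tilde{Z}=Z$ and $\tilde{K}=K$. The constraint $Y\geq S$ is immediate: Skorohod's equation forces $\hat{Y}\geq S$ in (\ref{yhat-eq1}), $S$ is adapted, and $\tilde{Y}$ is the optional projection of $\hat{Y}$, so $Y=\tilde{Y}\geq S$. Substituting the fixed point into the constructed decomposition (\ref{t-y-03}) and into (\ref{yhat-eq1}) gives two expressions that differ only in their finite-variation pushing terms,
$$Y_t=\xi+\int_t^T f(s,Y_s,Z_s,K_s^\flat)\,ds+K_T^o-K_t^o-\int_t^T Z_s\,dB_s$$
and
$$\hat{Y}_t=\xi+\int_t^T f(s,Y_s,Z_s,K_s^\flat)\,ds+K_T-K_t-\int_t^T Z_s\,dB_s,$$
so that $Y_t-\hat{Y}_t=(K_T^o-K_t^o)-(K_T-K_t)$. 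The whole difficulty is thus concentrated in showing that $K$ coincides with its dual optional projection $K^o$ (hence is optional and equals $K^\flat$), which collapses both displays onto the target equation (\ref{rbsde-ge}).

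The heart of the argument is a flat-off computation establishing $K=K^o$. Since $Y-S$ is optional and $K^o$ is the dual optional projection of $K$, the defining property gives $\mathbb{E}\int_0^T(Y_s-S_s)\,dK_s^o=\mathbb{E}\int_0^T(Y_s-S_s)\,dK_s$. Using that $\tilde{K}=K$ charges only $\{\hat{Y}=S\}$, I would replace $Y-S$ by $Y-\hat{Y}=(K_T^o-K_s^o)-(K_T-K_s)$ under $dK$; then an elementary integration by parts together with $\mathbb{E}\int_0^T K_s^o\,dK_s=\mathbb{E}\int_0^T K_s^o\,dK_s^o=\tfrac12\mathbb{E}(K_T^o)^2$ (the dual projection applied once more, now to the optional integrand $K^o$) yields
$$\mathbb{E}\int_0^T(Y_s-S_s)\,dK_s^o=-\tfrac12\,\mathbb{E}\big[(K_T-K_T^o)^2\big].$$
The left side is nonnegative because $Y\geq S$ and $K^o$ is increasing, while the right side is nonpositive; hence both vanish. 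In particular $\int_0^T(Y_s-S_s)\,dK_s^o=0$ almost surely, so the pair $(Y,K^o)$ satisfies the flat-off condition. I expect this identity to be the main obstacle: it is precisely the place where the $L^2$-contraction/dual-projection properties and the flat-off structure of the Skorohod reflection must be combined, and the cancellation producing the perfect square $-\tfrac12\mathbb{E}[(K_T-K_T^o)^2]$ is what makes the whole scheme close.

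With $(Y,K^o)$ now a genuine reflected pair---$Y\geq S$ adapted, of the form (\ref{we-05}) with increasing process $K^o$, and flat-off---the representation (\ref{local-r2}), equivalently the pathwise uniqueness in Skorohod's equation, applies to $(Y,Z,K^o)$ and identifies $K^o$ with the maximum-formula built from the data $\xi$, $f(\cdot,Y,Z,K^\flat)$, $Z$, $S$ (note $Y_T=\xi$ at the fixed point). That formula is exactly (\ref{ktilde-01}), so $K^o=\tilde{K}=K$. Hence $K$ is optional and $K=K^\flat=K^o$; the two displays of the first paragraph coincide and reduce to (\ref{rbsde-ge}), while the flat-off condition becomes $\int_0^T(Y_s-S_s)\,dK_s=0$. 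Together with $Y\geq S$ this is precisely (\ref{cons-02}), so $(Y,Z,K)$ solves the reflected BSDE with resistance.
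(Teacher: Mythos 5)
Your proposal is correct, and its skeleton is the same as the paper's: show $Y\geq S$, write the fixed point in the two forms (the adapted equation (\ref{t-y-03}) driven by $K^{o}$ and the raw equation (\ref{yhat-eq1}) driven by $K$), and identify $K^{o}=K$ through the uniqueness part of Skorohod's lemma, after which $K=K^{\flat}=K^{o}$ collapses everything onto (\ref{rbsde-ge})--(\ref{cons-02}). The genuine difference is at the decisive step. The paper's proof disposes of it in one sentence --- ``according to the uniqueness of the Skorohod's equation, it follows that $K^{o}=K$'' --- but pathwise uniqueness for the Skorohod problem requires the competing pair $(Y-S,K^{o})$ to satisfy \emph{all three} conditions: nonnegativity, the additive equation, \emph{and} the flat-off property that $K^{o}$ increases only on $\{Y=S\}$. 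The first two are immediate at the fixed point; the third is not, because $K$ charges $\{\hat{Y}=S\}$ while $Y={}^{o}\hat{Y}$ may a priori lie strictly above $S$ on that set. You single this out as the crux and actually prove it: the dual-projection identity $\mathbb{E}\int_{0}^{T}(Y_{s}-S_{s})\,dK_{s}^{o}=\mathbb{E}\int_{0}^{T}(Y_{s}-S_{s})\,dK_{s}$ (legitimate since $Y-S$ is optional), the fixed-point relation $Y-\hat{Y}=(K_{T}^{o}-K_{\cdot}^{o})-(K_{T}-K_{\cdot})$, and the integration by parts $\mathbb{E}\int_{0}^{T}K_{s}^{o}\,dK_{s}=\mathbb{E}\int_{0}^{T}K_{s}^{o}\,dK_{s}^{o}=\tfrac{1}{2}\mathbb{E}(K_{T}^{o})^{2}$ (using continuity of $K^{o}$) produce the perfect square $-\tfrac{1}{2}\mathbb{E}\bigl[(K_{T}-K_{T}^{o})^{2}\bigr]$, and the sign comparison forces both sides to vanish. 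The paper only supplies an argument of this kind later, inside the proof of the a priori estimate (\ref{key-est-a2}), via the chain $\mathbb{E}\int(\tilde{Y}-S)\,d\tilde{K}=\mathbb{E}\int(\tilde{Y}-S)\,d\tilde{K}^{o}=\mathbb{E}\bigl(\int(\hat{Y}-S)\,d\tilde{K}\bigr)^{o}=0$, whose middle step tacitly splits an optional projection between a raw integrand and a raw integrator --- something that is not a general fact of the theory. Your computation avoids that issue entirely by exploiting the explicit formula for $Y-\hat{Y}$ available exactly at a fixed point, so it is self-contained, rigorous where the paper is elliptic, and even yields $K_{T}=K_{T}^{o}$ in $L^{2}$ as a byproduct. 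The only point worth flagging is integrability: you use $\mathbb{E}[(K_{T}^{o})^{2}]<\infty$ and $\mathbb{E}[K_{T}K_{T}^{o}]<\infty$, which follow from $K\in\mathcal{A}^{2}(0,T)$ by a standard Doob-type bound for dual projections, and which the paper assumes implicitly as well.
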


\begin{proof}
Suppose $(Y,Z,K)$ is a fixed point of the non-linear mapping $\mathfrak{L}$,
so that 
\begin{equation*}
M_{t}=\mathbb{E}\left\{ \xi +\int_{0}^{T}f(s,Y_{s},Z_{s},K_{s}^{\flat
})ds+K_{T}-K_{t}|\mathcal{F}_{t}\right\} +K_{t}^{o}\text{,}
\end{equation*}%
and 
\begin{equation*}
Y_{t}=\mathbb{E}\left\{ \xi +\int_{t}^{T}f(s,Y_{s},Z_{s},K_{s}^{\flat
})ds+K_{T}-K_{t}|\mathcal{F}_{t}\right\}
\end{equation*}%
where 
\begin{eqnarray*}
K_{t} &=&\max \left[ 0,\max_{0\leq s\leq T}\left\{ -\left( \xi
+\int_{s}^{T}f(r,Y_{r},Z_{r},K_{r}^{\flat
})dr-S_{s}-\int_{s}^{T}Z_{r}dB_{r}\right) \right\} \right] \\
&&-\max \left[ 0,\max_{t\leq s\leq T}\left\{ -\left( \xi
+\int_{s}^{T}f(r,Y_{r},Z_{r},K_{r}^{\flat
})dr-S_{s}-\int_{s}^{T}Z_{r}dB_{r}\right) \right\} \right] \text{.}
\end{eqnarray*}%
Then $Y_{T}=\xi $ and 
\begin{equation*}
Y_{t}=M_{t}-K_{t}^{o}-\int_{0}^{t}f(s,Y_{s},Z_{s},K_{r}^{\flat })ds\text{,}
\end{equation*}%
so that 
\begin{equation*}
\xi -Y_{t}=\int_{t}^{T}Z_{s}dB_{s}-\left( K_{T}^{o}-K_{t}^{o}\right)
-\int_{t}^{T}f(s,Y_{s},Z_{s},K_{r}^{\flat })ds\text{ .}
\end{equation*}%
According to the uniqueness of the Skorohod's equation, it follows that $%
K^{o}=K$. therefore $K$ is adapted, and $K=K^{\flat }=K^{o}$. That completes
the proof.
\end{proof}

\subsection{Main estimates}

Let us develop several a priori estimates for $\mathfrak{L}(Y,Z,K)=(\tilde{Y}%
,\tilde{Z},\tilde{K})$. We begin with an elementary fact:

\begin{lemma}
\label{lem-c1}Let $\varphi ,\psi $ be two continuous paths in $\mathbb{R}%
^{1} $. Then 
\begin{equation*}
\left| \sup_{s\leq t}\varphi _{s}-\sup_{s\leq t}\psi _{s}\right| \leq
\sup_{s\leq t}|\varphi _{s}-\psi _{s}|\text{.}
\end{equation*}
\end{lemma}

\begin{proof}
$\delta =\sup_{s\leq t}|\varphi _{s}-\psi _{s}|$. Then 
\begin{equation*}
\varphi _{s}\leq \psi _{s}+\delta \leq \sup_{s\leq t}\psi _{s}+\delta
\end{equation*}
for any $s\leq t$, so that $\sup_{s\leq t}\varphi _{s}\leq \sup_{s\leq
t}\psi _{s}+\delta $. Similarly $\sup_{s\leq t}\psi _{s}\leq \sup_{s\leq
t}\varphi _{s}+\delta $.
\end{proof}

Suppose $(Y,Z,K)$, $(Y^{\prime },Z^{\prime },K^{\prime })\in \mathcal{S}%
^{2}\times \mathcal{H}_{d}^{2}\times \mathcal{A}^{2}$ such that $%
Y_{T}=Y_{T}^{\prime }=\xi $, and $Y\geq S$, $Y^{\prime }\geq S$.

Let us prove the following key a priori estimate about $\mathfrak{L}$. Let $(%
\tilde{Y},\tilde{Z},\tilde{K})=\mathfrak{L}(Y,Z,K)$ and $(\tilde{Y}^{\prime
},\tilde{Z}^{\prime },\tilde{K}^{\prime })=\mathfrak{L}(Y^{\prime
},Z^{\prime },K^{\prime })$. Let $\alpha \geq 0$ to be chosen late, and let $%
D_{t}=e^{\alpha t}|Y_{t}-Y_{t}^{\prime }|^{2}$ and $\tilde{D}_{t}=e^{\alpha
t}|\tilde{Y}_{t}-\tilde{Y}_{t}^{\prime }|^{2}$.

\begin{proposition}
Suppose $f$ satisfies the Lipschitz condition (\ref{lip-01}). Then for any $%
\alpha \geq 0$, $\varepsilon >0$ and $\varepsilon ^{\prime }>0$ we have%
\begin{eqnarray}
\mathbb{E}\left( \tilde{D}_{0}\right) &\leq &-(\alpha -\varepsilon
C_{1}-\varepsilon ^{\prime }C_{2})||\tilde{Y}-\tilde{Y}^{\prime }||_{\alpha
}^{2}-||\tilde{Z}-\tilde{Z}^{\prime }||_{\alpha }^{2}  \notag \\
&&+\frac{2C_{1}}{\varepsilon }\left( ||Y-Y^{\prime }||_{\alpha
}^{2}+||Z-Z^{\prime }||_{\alpha }^{2}\right)  \notag \\
&&+\frac{2C_{2}}{\varepsilon ^{\prime }}||K^{\flat }-K^{\prime }{}^{\flat
}||_{\alpha }^{2}  \label{key-est-a2}
\end{eqnarray}%
where $C_{1},C_{2}$ are the Lipschitz constants appearing in (\ref{lip-01}).
\end{proposition}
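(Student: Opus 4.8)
The plan is to subtract the two instances of the representation (\ref{t-y-03}) for $\tilde{Y}$ and $\tilde{Y}^{\prime}$, apply It\^{o}'s formula to $\tilde{D}_{t}=e^{\alpha t}|\tilde{Y}_{t}-\tilde{Y}_{t}^{\prime}|^{2}$, integrate over $[0,T]$ and take expectations. Writing $\Delta Y=\tilde{Y}-\tilde{Y}^{\prime}$, $\Delta Z=\tilde{Z}-\tilde{Z}^{\prime}$, $\Delta f_{s}=f(s,Y_{s},Z_{s},K_{s}^{\flat})-f(s,Y_{s}^{\prime},Z_{s}^{\prime},K_{s}^{\prime\flat})$, and recalling $\|\varphi\|_{\alpha}^{2}=\mathbb{E}\int_{0}^{T}e^{\alpha t}|\varphi_{t}|^{2}\,dt$, the dynamics $d\Delta Y_{t}=-\Delta f_{t}\,dt-d(\tilde{K}_{t}^{o}-\tilde{K}_{t}^{\prime o})+\Delta Z_{t}\,dB_{t}$ together with the fact that both $\tilde{Y}$ and $\tilde{Y}^{\prime}$ terminate at $\xi$ (so $\Delta Y_{T}=0$ and the boundary term $\tilde{D}_{T}$ vanishes) yield, after the Brownian integral $\int_{0}^{T}e^{\alpha t}\Delta Y_{t}\Delta Z_{t}\,dB_{t}$ drops out in expectation,
\[
\mathbb{E}(\tilde{D}_{0})=-\alpha\|\Delta Y\|_{\alpha}^{2}-\|\Delta Z\|_{\alpha}^{2}+2\mathbb{E}\int_{0}^{T}e^{\alpha t}\Delta Y_{t}\,\Delta f_{t}\,dt+2\mathbb{E}\int_{0}^{T}e^{\alpha t}\Delta Y_{t}\,d(\tilde{K}_{t}^{o}-\tilde{K}_{t}^{\prime o}).
\]
The task then reduces to bounding the last two integrals by the right-hand side of (\ref{key-est-a2}).

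For the driver integral I would invoke the Lipschitz bound (\ref{lip-01}), $|\Delta f_{t}|\leq C_{1}(|Y_{t}-Y_{t}^{\prime}|+|Z_{t}-Z_{t}^{\prime}|)+C_{2}|K_{t}^{\flat}-K_{t}^{\prime\flat}|$, and split the product $2|\Delta Y_{t}||\Delta f_{t}|$ by Young's inequality, using a weight $\varepsilon$ against the $C_{1}$-part and a weight $\varepsilon^{\prime}$ against the $C_{2}$-part. Combined with the elementary estimate $(|Y-Y^{\prime}|+|Z-Z^{\prime}|)^{2}\leq 2(|Y-Y^{\prime}|^{2}+|Z-Z^{\prime}|^{2})$, this produces an absorbing contribution $(\varepsilon C_{1}+\varepsilon^{\prime}C_{2})\|\Delta Y\|_{\alpha}^{2}$, which together with the $-\alpha\|\Delta Y\|_{\alpha}^{2}$ from It\^{o} gives the coefficient $-(\alpha-\varepsilon C_{1}-\varepsilon^{\prime}C_{2})$, together with the input terms $\tfrac{2C_{1}}{\varepsilon}(\|Y-Y^{\prime}\|_{\alpha}^{2}+\|Z-Z^{\prime}\|_{\alpha}^{2})$ and $\tfrac{2C_{2}}{\varepsilon^{\prime}}\|K^{\flat}-K^{\prime\flat}\|_{\alpha}^{2}$, exactly as displayed. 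This part is routine once the It\^{o} identity is in place.

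The real work is the increasing-process integral $2\mathbb{E}\int_{0}^{T}e^{\alpha t}\Delta Y_{t}\,d(\tilde{K}_{t}^{o}-\tilde{K}_{t}^{\prime o})$, which has no counterpart on the right-hand side of (\ref{key-est-a2}) and therefore must be shown to be nonpositive. The idea is that $e^{\alpha t}\Delta Y_{t}$ is optional while $\tilde{K}^{o},\tilde{K}^{\prime o}$ are the dual optional projections of the raw increasing processes $\tilde{K},\tilde{K}^{\prime}$ produced by the Skorohod map; hence the defining property of the dual optional projection (an optional integrand pairs identically with $dA^{o}$ and with $dA$ in expectation) rewrites the term as $2\mathbb{E}\int_{0}^{T}e^{\alpha t}\Delta Y_{t}\,d(\tilde{K}_{t}-\tilde{K}_{t}^{\prime})$. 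I would then bring in the flat-off property recorded in the construction: $\tilde{K}$ charges only $\{\hat{Y}=S\}$ and $\tilde{K}^{\prime}$ only $\{\hat{Y}^{\prime}=S\}$, with $\hat{Y},\hat{Y}^{\prime}\geq S$, so that the ``physical'' pairing $\mathbb{E}\int e^{\alpha t}(\hat{Y}-\hat{Y}^{\prime})\,d(\tilde{K}-\tilde{K}^{\prime})$ is manifestly $\leq 0$.

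The step I expect to be the main obstacle is precisely the reconciliation of these two pairings. The flat-off is expressed through the \emph{non-adapted} representatives $\hat{Y},\hat{Y}^{\prime}$ (which equal $S$ on the respective supports), whereas It\^{o} delivers the \emph{optional projections} $\tilde{Y}=\hat{Y}^{\flat},\tilde{Y}^{\prime}=\hat{Y}^{\prime\flat}$ (for which only $\tilde{Y},\tilde{Y}^{\prime}\geq S$ is available, and which need not meet $S$ where $\tilde{K}$ increases, as the construction itself points out). Passing between $\Delta Y=\tilde{Y}-\tilde{Y}^{\prime}$ and $\hat{Y}-\hat{Y}^{\prime}$ against the raw measures $d\tilde{K},d\tilde{K}^{\prime}$ is not a free move, since the projection defect $\tilde{Y}-\hat{Y}$ has vanishing optional projection but does not integrate to zero against a raw increasing process. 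I would therefore spend the bulk of the argument exploiting the dual optional projection identity $\mathbb{E}\int{}^{o}H\,dA=\mathbb{E}\int H\,dA^{o}$ together with the minimality of the Skorohod reflection to control this defect and recover the sign, after which assembling the three pieces gives (\ref{key-est-a2}).
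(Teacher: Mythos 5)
Your setup reproduces the paper's proof skeleton faithfully: It\^{o}'s formula applied to $\tilde{D}_{t}$ using the decomposition (\ref{y-tilded-de-01}), taking expectations so the stochastic integral drops out, Young's inequality with weights $\varepsilon,\varepsilon^{\prime}$ on the Lipschitz driver term (giving exactly the coefficients in (\ref{key-est-a2})), and the dual-projection identity $\mathbb{E}\int\varphi\,d(\tilde{K}^{o}-\tilde{K}^{\prime o})=\mathbb{E}\int\varphi\,d(\tilde{K}-\tilde{K}^{\prime})$ for optional $\varphi$. All of that is correct and matches the paper. The genuine gap is that the one step carrying the actual content of the proposition --- the nonpositivity of $\mathbb{E}\int e^{\alpha s}(\tilde{Y}_{s}-\tilde{Y}_{s}^{\prime})\,d(\tilde{K}_{s}-\tilde{K}_{s}^{\prime})$ --- is never proved: you correctly identify it as the main obstacle and then leave it as a promissory note (``I would spend the bulk of the argument \dots to control this defect and recover the sign''). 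Moreover, the route you sketch is the wrong decomposition: you propose to compare the optional pairing with the raw pairing $\mathbb{E}\int e^{\alpha s}(\hat{Y}_{s}-\hat{Y}_{s}^{\prime})\,d(\tilde{K}_{s}-\tilde{K}_{s}^{\prime})\leq 0$ and then correct by the projection defect $(\tilde{Y}-\hat{Y})-(\tilde{Y}^{\prime}-\hat{Y}^{\prime})$ integrated against the \emph{signed} measure $d(\tilde{K}-\tilde{K}^{\prime})$. That defect term has no sign structure at all, and neither the dual-projection identity nor minimality of the Skorohod map gives any handle on it; nothing in the hypotheses lets you absorb it into the right-hand side of (\ref{key-est-a2}).

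The paper never forms that term. It first performs the El Karoui four-term split at the level of the \emph{optional projections},
\begin{align*}
\mathbb{E}\int e^{\alpha s}(\tilde{Y}_{s}-\tilde{Y}_{s}^{\prime})\,d(\tilde{K}_{s}-\tilde{K}_{s}^{\prime})
&=\mathbb{E}\int e^{\alpha s}(\tilde{Y}_{s}-S_{s})\,d\tilde{K}_{s}
+\mathbb{E}\int e^{\alpha s}(\tilde{Y}_{s}^{\prime}-S_{s})\,d\tilde{K}_{s}^{\prime}\\
&\quad-\mathbb{E}\int e^{\alpha s}(\tilde{Y}_{s}-S_{s})\,d\tilde{K}_{s}^{\prime}
-\mathbb{E}\int e^{\alpha s}(\tilde{Y}_{s}^{\prime}-S_{s})\,d\tilde{K}_{s}\text{,}
\end{align*}
and discards the two cross terms, which are nonpositive for the trivial reason that $\tilde{Y},\tilde{Y}^{\prime}\geq S$ and $\tilde{K},\tilde{K}^{\prime}$ are increasing. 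This reduces everything to showing each \emph{diagonal} term $\mathbb{E}\int e^{\alpha s}(\tilde{Y}_{s}-S_{s})\,d\tilde{K}_{s}$ vanishes --- a statement involving one solution at a time and a nonnegative integrand. There the paper runs a one-directional projection chain: $\tilde{Y}-S$ is optional, so its pairing with $d\tilde{K}$ equals its pairing with $d\tilde{K}^{o}$; then, since $\tilde{Y}-S$ is the optional projection of $\hat{Y}-S$ and $d\tilde{K}^{o}$ is an optional measure, the paper identifies this expectation with the expectation of the dual optional projection of the increasing process $\int e^{\alpha s}(\hat{Y}_{s}-S_{s})\,d\tilde{K}_{s}$, which is identically zero pathwise because the Skorohod construction makes $\tilde{K}$ charge only $\{\hat{Y}=S\}$. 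So the flat-off property is used only in its raw pathwise form and only against its own $\tilde{K}$, while the sign of the cross terms comes for free from $\tilde{Y},\tilde{Y}^{\prime}\geq S$; the comparison of $\tilde{Y}-\tilde{Y}^{\prime}$ with $\hat{Y}-\hat{Y}^{\prime}$ under a signed measure, which is where your plan stalls, never arises. To repair your proof, adopt this order of operations: split by $S$ first, drop the cross terms, and only then project each diagonal term.
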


\begin{proof}
According to (\ref{y-tilded-de-01}) 
\begin{eqnarray}
\tilde{Y}_{t}-\tilde{Y}_{t}^{\prime } &=&\left( \tilde{M}_{t}-\tilde{M}%
_{t}^{\prime }\right) -(\tilde{K}_{t}^{o}-\tilde{K}_{t}^{\prime o})  \notag
\\
&&-\int_{0}^{t}\left( f(s,Y_{s},Z_{s},K_{s}^{\flat })-f(s,Y_{s}^{\prime
},Z_{s}^{\prime },K^{\prime }{}_{s}^{\flat })\right) ds  \label{D-teq1}
\end{eqnarray}%
where $\tilde{M}$ (resp. $\tilde{M}^{\prime }$) is the martingale part of $%
\tilde{Y}$ (resp. $\tilde{Y}^{\prime }$), given by (\ref{mtilde-01}), so
that by It\^{o}'s formula, 
\begin{eqnarray}
\tilde{D}_{t} &=&-\int_{t}^{T}e^{\alpha s}d\left( \tilde{Y}_{s}-\tilde{Y}%
_{s}^{\prime }\right) ^{2}-\alpha \int_{t}^{T}e^{\alpha s}\left( \tilde{Y}%
_{s}-\tilde{Y}_{s}^{\prime }\right) ^{2}ds  \notag \\
&=&-\int_{t}^{T}e^{\alpha s}d\langle \tilde{M}-\tilde{M}^{\prime }\rangle
_{s}-\alpha \int_{t}^{T}e^{\alpha s}\left( \tilde{Y}_{s}-\tilde{Y}%
_{s}^{\prime }\right) ^{2}ds  \notag \\
&&-\int_{t}^{T}2e^{\alpha s}\left( \tilde{Y}_{s}-\tilde{Y}_{s}^{\prime
}\right) d\left( \tilde{Y}_{s}-\tilde{Y}_{s}^{\prime }\right)  \notag \\
&=&-\alpha \int_{t}^{T}\tilde{D}_{s}ds-\int_{t}^{T}e^{\alpha s}d\langle 
\tilde{M}-\tilde{M}^{\prime }\rangle _{s}-2\int_{t}^{T}e^{\alpha s}\left( 
\tilde{Y}_{s}-\tilde{Y}_{s}^{\prime }\right) d\left( \tilde{M}_{s}-\tilde{M}%
_{s}^{\prime }\right)  \notag \\
&&+2\int_{t}^{T}e^{\alpha s}\left( \tilde{Y}_{s}-\tilde{Y}_{s}^{\prime
}\right) d\left( \tilde{K}_{s}^{o}-\tilde{K}_{s}^{\prime o}\right)  \notag \\
&&+2\int_{t}^{T}e^{\alpha s}\left( \tilde{Y}_{s}-\tilde{Y}_{s}^{\prime
}\right) \left( f(s,Y_{s},Z_{s},K_{s}^{\flat })-f(s,Y_{s}^{\prime
},Z_{s}^{\prime },K^{\prime }{}_{s}^{\flat })\right) ds\text{.}
\label{kt-01}
\end{eqnarray}%
Taking expectation to obtain 
\begin{eqnarray}
\mathbb{E}\tilde{D}_{t} &=&-\alpha \int_{t}^{T}\mathbb{E}\left( \tilde{D}%
_{s}\right) ds-\mathbb{E}\int_{t}^{T}e^{\alpha s}d\langle \tilde{M}-\tilde{M}%
^{\prime }\rangle _{s}  \notag \\
&&+2\mathbb{E}\int_{t}^{T}e^{\alpha s}\left( \tilde{Y}_{s}-\tilde{Y}%
_{s}^{\prime }\right) d(\tilde{K}_{s}-\tilde{K}_{s}^{\prime })  \notag \\
&&+2\int_{t}^{T}\mathbb{E}\left\{ e^{\alpha s}\left( \tilde{Y}_{s}-\tilde{Y}%
_{s}^{\prime }\right) \left[ f(s,Y_{s},Z_{s},K_{s}^{\flat
})-f(s,Y_{s}^{\prime },Z_{s}^{\prime },K^{\prime }{}_{s}^{\flat })\right]
\right\} ds\text{,}  \label{hg-tr1}
\end{eqnarray}%
where we have used the fact that 
\begin{equation*}
\mathbb{E}\int_{t}^{T}\varphi _{s}d\left( \tilde{K}_{s}^{o}-\tilde{K}%
_{s}^{\prime o}\right) =\mathbb{E}\int_{t}^{T}\varphi _{s}d\left( \tilde{K}%
_{s}-\tilde{K}_{s}^{\prime }\right)
\end{equation*}%
for an optional process $\varphi $. Now we use an important observation due
to \cite{MR1434123}, that is, 
\begin{eqnarray*}
&&\mathbb{E}\int_{t}^{T}e^{\alpha s}\left( \tilde{Y}_{s}-\tilde{Y}%
_{s}^{\prime }\right) d(\tilde{K}_{s}-\tilde{K}_{s}^{\prime }) \\
&=&\mathbb{E}\int_{t}^{T}e^{\alpha s}(\tilde{Y}_{s}-S_{s})d\tilde{K}_{s}+%
\mathbb{E}\int_{t}^{T}e^{\alpha s}(\tilde{Y}_{s}^{\prime }-S_{s})d\tilde{K}%
_{s}^{\prime } \\
&&-\mathbb{E}\int_{t}^{T}e^{\alpha s}(\tilde{Y}_{s}-S_{s})d\tilde{K}%
_{s}^{\prime }-\mathbb{E}\int_{t}^{T}e^{\alpha s}(\tilde{Y}_{s}^{\prime
}-S_{s})d\tilde{K}_{s} \\
&\leq &\mathbb{E}\int_{t}^{T}e^{\alpha s}(\tilde{Y}_{s}-S_{s})d\tilde{K}_{s}+%
\mathbb{E}\int_{t}^{T}e^{\alpha s}(\tilde{Y}_{s}^{\prime }-S_{s})d\tilde{K}%
_{s}^{\prime }\text{.}
\end{eqnarray*}%
Moreover, according to Skorohod's equation, $\tilde{K}$ increases only on $%
\{s:\hat{Y}_{s}-S_{s}=0\}$ so that 
\begin{equation*}
\mathbb{E}\int_{t}^{T}e^{\alpha s}(\hat{Y}_{s}-S_{s})d\tilde{K}_{s}=0\text{.}
\end{equation*}%
Since $\tilde{Y}$ is the optional projection, and $\tilde{K}^{o}$ is the
dual optional projection of $\tilde{K}$, therefore 
\begin{eqnarray*}
\mathbb{E}\int_{t}^{T}e^{\alpha s}(\tilde{Y}_{s}-S_{s})d\tilde{K}_{s} &=&%
\mathbb{E}\int_{t}^{T}e^{\alpha s}(\tilde{Y}_{s}-S_{s})d\tilde{K}_{s}^{o} \\
&=&\mathbb{E}\left( \int_{t}^{T}e^{\alpha s}(\hat{Y}_{s}-S_{s})d\tilde{K}%
_{s}\right) ^{o}\text{.}
\end{eqnarray*}%
Since $\tilde{K}$ increases only on $\{s:\hat{Y}_{s}-S_{s}=0\}$, so that $%
\int_{t}^{T}e^{\alpha s}(\hat{Y}_{s}-S_{s})d\tilde{K}_{s}=0$ and therefore $%
\mathbb{E}\int_{t}^{T}e^{\alpha s}(\tilde{Y}_{s}-S_{s})d\tilde{K}_{s}=0$.
Similarly $\mathbb{E}\int_{t}^{T}e^{\alpha s}(\tilde{Y}_{s}^{\prime }-S_{s})d%
\tilde{K}_{s}^{\prime }=0$. Hence 
\begin{equation*}
\mathbb{E}\int_{t}^{T}e^{\alpha s}\left( \tilde{Y}_{s}-\tilde{Y}_{s}^{\prime
}\right) d(\tilde{K}_{s}-\tilde{K}_{s}^{\prime })\leq 0\text{ .}
\end{equation*}%
Putting this estimate into (\ref{hg-tr1}) to obtain 
\begin{eqnarray}
\mathbb{E}\left( \tilde{D}_{t}\right) &\leq &-\alpha \int_{t}^{T}\mathbb{E}%
\left( \tilde{D}_{s}\right) ds-\mathbb{E}\int_{t}^{T}e^{\alpha s}d\langle 
\tilde{M}-\tilde{M}^{\prime }\rangle _{s}  \notag \\
&&+2\int_{t}^{T}e^{\alpha s}\mathbb{E}\left\{ \left( \tilde{Y}_{s}-\tilde{Y}%
_{s}^{\prime }\right) \left[ f(s,Y_{s},Z_{s},K_{s}^{\flat
})-f(s,Y_{s}^{\prime },Z_{s}^{\prime },K^{\prime }{}_{s}^{\flat })\right]
\right\} ds\text{.}  \label{gh-j1}
\end{eqnarray}%
We use the global Lipschitz continuity of $f$ to handle the last integral on
the right-hand side of (\ref{gh-j1}), the method however is standard. Indeed%
\begin{eqnarray}
\mathbb{E}\left( \tilde{D}_{t}\right) &\leq &-\alpha \int_{t}^{T}\mathbb{E}%
\left( \tilde{D}_{s}\right) ds-\mathbb{E}\int_{t}^{T}e^{\alpha s}|\tilde{Z}%
_{s}-\tilde{Z}_{s}|^{2}ds  \notag \\
&&+2C_{1}\int_{t}^{T}e^{\alpha s}\mathbb{E}\left( |\tilde{Y}_{s}-\tilde{Y}%
_{s}^{\prime }|\left( |Y_{s}-Y_{s}^{\prime }|+|Z_{s}-Z_{s}^{\prime }|\right)
\right) ds  \notag \\
&&+2C_{2}\int_{t}^{T}e^{\alpha s}\mathbb{E}\left( |\tilde{Y}_{s}-\tilde{Y}%
_{s}^{\prime }||K_{s}^{\flat }-K^{\prime }{}_{s}^{\flat }|\right) ds  \notag
\\
&\leq &-(\alpha -\varepsilon C_{1}-\varepsilon ^{\prime }C_{2})\int_{t}^{T}%
\mathbb{E}\left( \tilde{D}_{s}\right) ds-\mathbb{E}\int_{t}^{T}e^{\alpha s}|%
\tilde{Z}_{s}-\tilde{Z}_{s}|^{2}ds  \notag \\
&&+\frac{2C_{1}}{\varepsilon }\mathbb{E}\int_{t}^{T}e^{\alpha s}\left(
|Y_{s}-Y_{s}^{\prime }|^{2}+|Z_{s}-Z_{s}^{\prime }|^{2}\right) ds  \notag \\
&&+\frac{2C_{2}}{\varepsilon ^{\prime }}\mathbb{E}\int_{t}^{T}e^{\alpha
s}|K_{s}^{\flat }-K^{\prime }{}_{s}^{\flat }|^{2}ds\text{.}  \label{uy-01}
\end{eqnarray}%
Choosing $t=0$ we deduce the required estimate.
\end{proof}

The next estimate is also essential in this article.

\begin{proposition}
We have%
\begin{eqnarray}
||\tilde{K}-\tilde{K}^{\prime }||_{\infty }^{2} &\leq &\left(
24TC_{1}^{2}+4C_{3}\right) \left( ||Y-Y^{\prime }||_{0}^{2}+||Z-Z^{\prime
}||_{0}^{2}\right)  \notag \\
&&+24T^{2}C_{1}^{2}||K-K^{\prime }||_{\infty }^{2}  \label{k-est-b1}
\end{eqnarray}%
where $||K-K^{\prime }||_{\infty }^{2}=\sup_{0\leq t\leq T}\mathbb{E}%
|K_{s}-K_{s}^{\prime }|^{2}$, where $C_{3}$ is the constant appearing in the
Burkholder inequality.
\end{proposition}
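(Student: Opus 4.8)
The plan is to read $\tilde K$ and $\tilde K'$ directly off the Skorohod representation (\ref{ktilde-01}) and to exploit that both $r\mapsto\max(0,r)$ and the running-maximum functional are $1$-Lipschitz on paths. Writing
\[
\Psi_s=-\left(\xi+\int_s^T f(r,Y_r,Z_r,K_r^\flat)\,dr-S_s-\int_s^T Z_r\,dB_r\right),
\]
and $\Psi_s'$ for the analogous quantity built from $(Y',Z',K')$, formula (\ref{ktilde-01}) reads $\tilde K_t=\max[0,\max_{0\le s\le T}\Psi_s]-\max[0,\max_{t\le s\le T}\Psi_s]$, and similarly for $\tilde K_t'$. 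Applying Lemma \ref{lem-c1} to each of the two maxima (the same argument works for a supremum over $[t,T]$, and composing with $\max(0,\cdot)$ only improves the constant) yields the pathwise, \emph{$t$-independent} bound
\[
\sup_{0\le t\le T}\bigl|\tilde K_t-\tilde K_t'\bigr|\le 2\,\sup_{0\le s\le T}\bigl|\Psi_s-\Psi_s'\bigr|.
\]
Consequently $\|\tilde K-\tilde K'\|_\infty^2=\sup_{0\le t\le T}\mathbb{E}|\tilde K_t-\tilde K_t'|^2\le 4\,\mathbb{E}\bigl[\sup_{0\le s\le T}|\Psi_s-\Psi_s'|^2\bigr]$, so the supremum over $t$ hidden in the norm comes for free and the whole problem reduces to estimating the right-hand side.

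Next I would split $\Psi_s-\Psi_s'$ into its finite-variation and martingale parts. Since $\xi$ and $S_s$ cancel,
\[
\Psi_s-\Psi_s'=-\int_s^T\Delta f_r\,dr+\int_s^T(Z_r-Z_r')\,dB_r,
\]
where $\Delta f_r=f(r,Y_r,Z_r,K_r^\flat)-f(r,Y_r',Z_r',(K'_r)^\flat)$. For the stochastic integral I would set $M_t=\int_0^t(Z_r-Z_r')\,dB_r$, write $\int_s^T(Z_r-Z_r')\,dB_r=M_T-M_s$, bound its running supremum by $2\sup_{0\le t\le T}|M_t|$, and apply the Burkholder (or Doob) inequality, giving $\mathbb{E}[\sup_{0\le t\le T}|M_t|^2]\le C_3\,\mathbb{E}\int_0^T|Z_r-Z_r'|^2\,dr=C_3\|Z-Z'\|_0^2$. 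This is the origin of the $C_3$-term, which I would then enlarge from $\|Z-Z'\|_0^2$ to $\|Y-Y'\|_0^2+\|Z-Z'\|_0^2$ to match the stated form.

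For the finite-variation part I would insert the Lipschitz estimate (\ref{lip-01}),
\[
\int_0^T|\Delta f_r|\,dr\le C_1\int_0^T\bigl(|Y_r-Y_r'|+|Z_r-Z_r'|\bigr)\,dr+C_2\int_0^T\bigl|K_r^\flat-(K'_r)^\flat\bigr|\,dr,
\]
and apply Cauchy--Schwarz in $r$. The $(Y,Z)$-integral squared is $\le 2T\int_0^T(|Y_r-Y_r'|^2+|Z_r-Z_r'|^2)\,dr$, whose expectation is $2T(\|Y-Y'\|_0^2+\|Z-Z'\|_0^2)$; the $K$-integral squared is $\le T\int_0^T|K_r^\flat-(K'_r)^\flat|^2\,dr$, whose expectation is at most $T^2\|K^\flat-(K')^\flat\|_\infty^2$. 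The decisive point is that the driver only sees the \emph{optional projection} $K^\flat$, so I would invoke the stated fact that $K\mapsto K^\flat$ is an $L^2$-contraction to replace $\|K^\flat-(K')^\flat\|_\infty^2$ by $\|K-K'\|_\infty^2$. Collecting the numerical factors produced by the Skorohod constant $2$, the elementary inequality $(a+b+c)^2\le 3(a^2+b^2+c^2)$, and the two Cauchy--Schwarz steps then yields the three advertised constants, the $K$-contribution carrying the factor $T^2$ together with the Lipschitz modulus in the $k$-variable.

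The step I expect to be the delicate structural point, rather than a computational obstacle, is the $K$-term. Because $\tilde K$ depends on the \emph{entire} path of $K^\flat$ through the running maxima in (\ref{ktilde-01}), this dependence is non-local in time, and one cannot introduce an exponential weight $e^{\alpha s}$ to absorb the constant, as was done for $\tilde Y$ in (\ref{key-est-a2}). The $K$-contribution is therefore controlled only by the crude time-integral bound, which unavoidably carries the factor $T^2$ and forces, when this estimate is later combined with (\ref{key-est-a2}) in the fixed-point argument, the Lipschitz constant in $k$ (and/or the horizon $T$) to be small, so that the coefficient in front of $\|K-K'\|_\infty^2$ can be made strictly less than one. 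I would accordingly keep the $K$-coefficient isolated and explicitly proportional to $T^2$ throughout the bookkeeping, rather than merging it with the $(Y,Z)$-terms.
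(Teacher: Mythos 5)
Your proposal is correct and follows essentially the same route as the paper's own proof: both read $\tilde{K},\tilde{K}'$ off the Skorohod representation (\ref{ktilde-01}), use the $1$-Lipschitz property of the pointwise and running maxima (Lemma \ref{lem-c1}), split the difference of the arguments into a drift part and a martingale part (with $\xi$ and $S$ cancelling), control these by Cauchy--Schwarz plus the Lipschitz condition (\ref{lip-01}) and by the Burkholder inequality respectively, and finish with the $L^{2}$-contraction property of the optional projection to pass from $K^{\flat}-K'^{\flat}$ to $K-K'$. Your closing observation that the coefficient of $||K-K'||_{\infty}^{2}$ carries $T^{2}$ times the Lipschitz modulus in the $k$-variable is also exactly what the paper's proof produces (namely $24T^{2}C_{2}^{2}$, which is what Theorem \ref{ex-01} actually uses); the $24T^{2}C_{1}^{2}$ appearing in the proposition's display is a typo in the statement, not a discrepancy in your argument.
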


\begin{proof}
Recall that%
\begin{eqnarray*}
\tilde{K}_{t} &=&\max \left[ 0,\max_{0\leq s\leq T}\left\{ -\left( \xi
+\int_{s}^{T}f(r,Y_{r},Z_{r},K_{r}^{\flat
})dr-S_{s}-\int_{s}^{T}Z_{r}dB_{r}\right) \right\} \right] \\
&&-\max \left[ 0,\max_{t\leq s\leq T}\left\{ -\left( \xi
+\int_{s}^{T}f(r,Y_{r},Z_{r},K_{r}^{\flat
})dr-S_{s}-\int_{s}^{T}Z_{r}dB_{r}\right) \right\} \right]
\end{eqnarray*}%
which yields that%
\begin{eqnarray*}
|\tilde{K}_{t}-\tilde{K}_{t}^{\prime }|^{2} &\leq &4T\int_{0}^{T}\left\vert
f(s,Y_{s},Z_{s},K_{s}^{\flat })-f(s,Y_{s}^{\prime },Z_{s}^{\prime
},K_{s}^{\prime \flat })\right\vert ^{2}ds \\
&&+4\left\vert \sup_{0\leq s\leq T}\int_{s}^{T}(Z_{r}-Z_{r}^{\prime
})dB_{r}\right\vert ^{2} \\
&\leq &24TC_{1}^{2}\int_{0}^{T}\left( \left\vert Y_{s}-Y_{s}^{\prime
}\right\vert ^{2}+\left\vert Z_{s}-Z_{s}^{\prime }\right\vert ^{2}\right) ds
\\
&&+24TC_{2}^{2}\int_{0}^{T}|K_{s}^{\flat }-K_{s}^{\prime \flat
}|^{2}ds+4\left\vert \sup_{0\leq s\leq T}\int_{0}^{T}(Z_{r}-Z_{r}^{\prime
})dB_{r}\right\vert ^{2}
\end{eqnarray*}%
so that 
\begin{eqnarray*}
\mathbb{E}|\tilde{K}_{t}-\tilde{K}_{t}^{\prime }|^{2} &\leq &\left(
24TC_{1}^{2}+4C_{3}\right) \mathbb{E}\int_{0}^{T}\left( \left\vert
Y_{s}-Y_{s}^{\prime }\right\vert ^{2}+\left\vert Z_{s}-Z_{s}^{\prime
}\right\vert ^{2}\right) ds \\
&&+24TC_{1}^{2}\int_{0}^{T}\mathbb{E}|K_{s}^{\flat }-K_{s}^{\prime \flat
}|^{2}ds \\
&\leq &\left( 24TC_{1}^{2}+4C_{3}\right) \mathbb{E}\int_{0}^{T}\left(
\left\vert Y_{s}-Y_{s}^{\prime }\right\vert ^{2}+\left\vert
Z_{s}-Z_{s}^{\prime }\right\vert ^{2}\right) ds \\
&&+24TC_{2}^{2}\int_{0}^{T}\mathbb{E}|K_{s}-K_{s}^{\prime }|^{2}ds
\end{eqnarray*}%
which implies (\ref{k-est-b1}).
\end{proof}

\subsection{Existence theorem}

We are now in a position to show the existence of a solution to (\ref%
{rbsde-ge})-(\ref{cons-02}).

\begin{theorem}
\label{ex-01}There is a constant $C_{0}>0$ depending only on $C_{1}$ such
that if $C_{2}T\leq C_{0}$, where $C_{2}$ is the Lipschitz constant
appearing in (\ref{lip-01}), then there is a unique solution $(Y,Z,K)$ to
the problem (\ref{rbsde-ge})-(\ref{cons-02}). Moreover the reversed local
time satisfies (\ref{k-local-l1}). If $C_{2}=0$ that is the driver $f$ does
not depend on $K$, then there is no restriction on $T$.
\end{theorem}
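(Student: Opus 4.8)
The plan is to realize the solution as the unique fixed point of the mapping $\mathfrak{L}:(Y,Z,K)\mapsto(\tilde Y,\tilde Z,\tilde K)$ via the Banach fixed point theorem, and then invoke Proposition \ref{prop-03} to conclude that this fixed point solves (\ref{rbsde-ge})--(\ref{cons-02}). For uniqueness it suffices to observe that, conversely, any solution $(Y,Z,K)$ of (\ref{rbsde-ge})--(\ref{cons-02}) is itself a fixed point of $\mathfrak{L}$: since $K$ is optional we have $K^{\flat}=K^{o}=K$, so the Skorohod representation (\ref{k-local-l1}) forces $\tilde K=K$, whence $\hat Y$ is already adapted and $\tilde Y=\hat Y^{\flat}=Y$, $\tilde Z=Z$. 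Thus the set of solutions coincides with the set of fixed points, and a contraction estimate delivers both existence and uniqueness simultaneously; the representation (\ref{k-local-l1}) then holds by the very construction of $\tilde K$.

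The core of the argument is to combine the two a priori estimates (\ref{key-est-a2}) and (\ref{k-est-b1}) into a single contraction on the product space $\mathcal{S}^{2}\times\mathcal{H}_{d}^{2}\times\mathcal{A}^{2}$, equipped with a weighted norm of the form $\|Y\|_{\alpha}^{2}+\|Z\|_{\alpha}^{2}+\beta\|K\|_{\infty}^{2}$. Writing $(Y^{n+1},Z^{n+1},K^{n+1})=\mathfrak{L}(Y^{n},Z^{n},K^{n})$ and setting $u_{n}=\|Y^{n}-Y^{n-1}\|_{\alpha}^{2}+\|Z^{n}-Z^{n-1}\|_{\alpha}^{2}$ and $v_{n}=\|K^{n}-K^{n-1}\|_{\infty}^{2}$, I would first use (\ref{key-est-a2}) with the choice $\varepsilon=4C_{1}$ and $\alpha$ large enough that the coefficient $\alpha-\varepsilon C_{1}-\varepsilon^{\prime}C_{2}$ is at least $1$; since the optional projection is an $L^{2}$-contraction, the term $\|K^{\flat}-K^{\prime\flat}\|_{\alpha}^{2}$ is dominated by $\|K-K^{\prime}\|_{\alpha}^{2}\le\left(\int_{0}^{T}e^{\alpha s}ds\right)\|K-K^{\prime}\|_{\infty}^{2}$. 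This yields a bound $u_{n+1}\le a\,u_{n}+b\,v_{n}$ with $a=\tfrac{2C_{1}}{\varepsilon}=\tfrac12$ (for every $T$, because the $C_{1}$-terms are absorbed by the exponential weight). Estimate (\ref{k-est-b1}), together with $\|\cdot\|_{0}\le\|\cdot\|_{\alpha}$ and the correct Lipschitz scaling of the $K$-coefficient, gives $v_{n+1}\le c\,u_{n}+d\,v_{n}$ with $c=24TC_{1}^{2}+4C_{3}$ and $d$ of order $T^{2}C_{2}^{2}$. Convergence of the iterates then reduces to the requirement that the nonnegative $2\times2$ transition matrix with rows $(a,b)$ and $(c,d)$ has spectral radius strictly less than one, i.e. $a<1$, $d<1$ and $bc<(1-a)(1-d)$; choosing $\beta$ between $a/c$ and $(1-a)/c$ converts this into a genuine contraction in the weighted product norm.

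The main obstacle is the constant-chasing in closing this coupling uniformly, that is, producing a threshold $C_{0}=C_{0}(C_{1})$ so that $C_{2}T\le C_{0}$ forces both $d<1$ and $bc<(1-a)(1-d)$. The delicate point is that the feedback gain from $K$ into $(Y,Z)$ carries the factor $\int_{0}^{T}e^{\alpha s}ds$, which grows with $T$, while the off-diagonal coefficient $b=\tfrac{2C_{2}}{\varepsilon^{\prime}}$ can be made small only by enlarging $\varepsilon^{\prime}$, which in turn enlarges the admissible $\alpha$; the balancing of $\alpha$, $\varepsilon^{\prime}$ and $\beta$ against the product $C_{2}T$ is where all the work lies, and it is precisely the smallness of $C_{2}T$ that lets the off-diagonal product be absorbed. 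Finally, the degenerate case $C_{2}=0$ is transparent: the driver no longer depends on $K$, so $b=0$ and $d=0$, the two estimates decouple, and the single estimate (\ref{key-est-a2}) already makes $\mathfrak{L}$ a contraction in the $\alpha$-norm for arbitrary $T$, giving existence and uniqueness with no restriction on the terminal time.
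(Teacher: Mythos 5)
Your proposal is correct and is essentially the paper's own proof: a Banach fixed point argument for $\mathfrak{L}$ in the weighted norm $||Y||_{\alpha }^{2}+||Z||_{\alpha }^{2}+\beta ||K||_{\infty }^{2}$, combining (\ref{key-est-a2}) and (\ref{k-est-b1}) via the bound $||K^{\flat }-K^{\prime \flat }||_{\alpha }^{2}\leq \frac{e^{\alpha T}-1}{\alpha }||K-K^{\prime }||_{\infty }^{2}$, with the smallness of $C_{2}T$ (and the decoupling when $C_{2}=0$) closing the contraction and Proposition \ref{prop-03} identifying the fixed point as a solution; your $2\times 2$ spectral-radius condition is the paper's choice of $\beta $ in disguise (note only that $\beta $ must lie between $b/(1-d)$ and $(1-a)/c$, not between $a/c$ and $(1-a)/c$). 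The one genuine addition is your direct uniqueness argument showing that every solution is itself a fixed point of $\mathfrak{L}$ (via $K^{\flat }=K^{o}=K$ and the representation (\ref{k-local-l1})), which the paper instead defers to its continuous dependence theorem in the final section; your route is valid and keeps the whole statement, existence and uniqueness, inside a single fixed-point argument.
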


\begin{proof}
Let $\alpha \geq 0$ and $\beta >0$ to be chosen late, and define%
\begin{equation*}
||(Y,Z,K)-(Y^{\prime },Z^{\prime },K^{\prime })||_{\alpha ,\beta
}^{2}=||Y-Y^{\prime }||_{\alpha }^{2}+||Z-Z^{\prime }||_{\alpha }^{2}+\beta
||K-K^{\prime }||_{\infty }^{2}\text{.}
\end{equation*}%
Let $(\tilde{Y},\tilde{Z},\tilde{K})=\mathfrak{L}(Y,Z,K)$ and $(\tilde{Y}%
^{\prime },\tilde{Z}^{\prime },\tilde{K}^{\prime })=\mathfrak{L}(Y^{\prime
},Z^{\prime },K^{\prime })$. Then%
\begin{equation*}
||K^{\flat }-K^{\prime }{}^{\flat }||_{\alpha }^{2}\leq \frac{e^{\alpha T}-1%
}{\alpha }||K-K^{\prime }||_{\infty }^{2}
\end{equation*}%
so that, together with (\ref{key-est-a2}) (in which choose $\alpha
-\varepsilon C_{1}-\varepsilon ^{\prime }C_{2}=1$) 
\begin{eqnarray}
&&||\tilde{Y}-\tilde{Y}^{\prime }||_{\alpha }^{2}+||\tilde{Z}-\tilde{Z}%
^{\prime }||_{\alpha }^{2}+\beta ||\tilde{K}-\tilde{K}^{\prime }||_{\infty
}^{2}  \notag \\
&\leq &\left[ \left( 24TC_{1}^{2}+4C_{3}\right) \beta +\frac{2C_{1}}{%
\varepsilon }\right] \left( ||Y-Y^{\prime }||_{0}^{2}+||Z-Z^{\prime
}||_{0}^{2}\right)  \notag \\
&&+\left( \frac{24C_{2}^{2}}{\beta }T^{2}+\frac{2C_{2}}{\varepsilon ^{\prime
}\beta }\frac{e^{\alpha T}-1}{\alpha }\right) \beta ||K-K^{\prime
}||_{\infty }^{2}\text{.}  \label{contr-01}
\end{eqnarray}%
Choose $\varepsilon =8C_{1}$, $\varepsilon ^{\prime }=1$, $\alpha
=1+8C_{1}^{2}+C_{2}$ and $\beta =\frac{1}{16\left( 4TC_{1}^{2}+1\right) }$.
Then there is a number $C_{0}>0$ such that if $C_{2}T\leq C_{0}$, we have 
\begin{equation*}
12C_{2}^{2}T^{2}+C_{2}\frac{e^{\left( 1+8C_{1}^{2}+C_{2}\right) T}-1}{%
1+8C_{1}^{2}+C_{2}}\leq \frac{1}{48\left( 6TC_{1}^{2}+C_{3}\right) }
\end{equation*}%
so that%
\begin{equation*}
\frac{24C_{2}^{2}}{\beta }T^{2}+\frac{2C_{2}}{\varepsilon ^{\prime }\beta }%
\frac{e^{\alpha T}-1}{\alpha }\leq \frac{1}{2}\text{.}
\end{equation*}%
Hence%
\begin{equation}
||(\tilde{Y},\tilde{Z},\tilde{K})-(\tilde{Y}^{\prime },\tilde{Z}^{\prime },%
\tilde{K}^{\prime })||_{\alpha ,\beta }\leq \frac{1}{\sqrt{2}}%
||(Y,Z,K)-(Y^{\prime },Z^{\prime },K^{\prime })||_{\alpha ,\beta }\text{,}
\label{contr-a2}
\end{equation}%
so there is a fixed point $(Y,Z,K)$, which is clearly a solution according
to Proposition \ref{prop-03}.
\end{proof}

\subsection{Continuous dependence and uniqueness}

Here we study the continuous dependence result of the solution of this
reflected equation with respect to the parameters, which will lead to the
uniqueness of the solution immediately. First we consider following a priori
estimation.

\begin{proposition}
Under the same assumptions in Theorem \ref{ex-01}. Suppose $(Y,Z,K)$ to be
the solution of reflected BSDE(\ref{rbsde-ge}), then there exists a constant 
$C$ depending only on $C_{1}$ and $C_{2}T$, such that 
\begin{equation}
\mathbb{E}\left( \sup_{0\leq t\leq
T}Y_{t}^{2}+\int_{0}^{T}|Z_{s}|^{2}ds+K_{T}^{2}\right) \leq C\mathbb{E}%
\left( \xi ^{2}+\int_{0}^{T}(f_{t}^{0})^{2}dt+(\sup_{0\leq t\leq
T}S_{t}^{+})^{2}\right) \text{.}  \label{we-04}
\end{equation}
\end{proposition}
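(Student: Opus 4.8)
The plan is to establish the bound \eqref{we-04} by applying Itô's formula to $e^{\alpha t}Y_t^2$ for a suitably large $\alpha$, exploiting the flat-off condition together with the sign of the reflecting term, and then closing the estimate with the Burkholder--Davis--Gundy inequality on the martingale part. First I would write the solution via \eqref{rbsde-ge}, so that the martingale part of $Y$ is $M_t=\int_0^t Z_s\,dB_s$, and apply Itô to $e^{\alpha t}Y_t^2$. Integrating from $t$ to $T$ and using $Y_T=\xi$, I expect the quadratic variation term $\int_t^T e^{\alpha s}|Z_s|^2\,ds$ and the drift term $\alpha\int_t^T e^{\alpha s}Y_s^2\,ds$ to appear on the left, while the right-hand side collects the cross terms $2\int_t^T e^{\alpha s}Y_s f(s,Y_s,Z_s,K_s)\,ds$, the reflecting term $2\int_t^T e^{\alpha s}Y_s\,dK_s$, and a stochastic integral against $dB$ whose expectation vanishes.

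The crucial structural observation is that the reflecting term is controlled by the barrier: since $K$ increases only on $\{Y_s=S_s\}$ by the flat-off condition \eqref{cons-02}, one has $\int_t^T e^{\alpha s}Y_s\,dK_s=\int_t^T e^{\alpha s}S_s\,dK_s\le \bigl(\sup_{s\le T}S_s^{+}\bigr)(K_T-K_t)$. I would then handle the driver term by inserting $f^0(s)=f(s,0,0,0)$ and using the Lipschitz condition \eqref{lip-01}: the elementary inequality $2ab\le \varepsilon a^2+\varepsilon^{-1}b^2$ lets me absorb the $|Y|^2$, $|Z|^2$ pieces into the left-hand side by choosing $\alpha$ large and the splitting constants $\varepsilon$ small enough, so that the coefficient of $\int_t^T e^{\alpha s}|Z_s|^2\,ds$ stays strictly positive and the $Y^2$ contribution is dominated. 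The dependence on $K$ through $C_2|K_s|$ is the one delicate contribution; here the hypothesis $C_2 T\le C_0$ from Theorem \ref{ex-01} is what guarantees this term can be absorbed rather than growing uncontrollably, and I would carry $C$ as a constant depending on $C_1$ and $C_2T$ precisely to track this.

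The main obstacle is estimating $\mathbb{E}(K_T^2)$ and $\mathbb{E}\sup_t Y_t^2$, because the reflecting term $\sup_s S_s^{+}\cdot K_T$ mixes the quantity we are bounding with the barrier, so a naive bound is circular. The standard remedy is to first obtain a bound on $\mathbb{E}(\sup_t Y_t^2)+\mathbb{E}\int_0^T|Z_s|^2\,ds$ that is linear in $\mathbb{E}(K_T^2)$ with a small coefficient, then isolate $K_T$ from \eqref{rbsde-ge} written at $t=0$, namely
\begin{equation*}
K_T=Y_0-\xi-\int_0^T f(s,Y_s,Z_s,K_s)\,ds+\int_0^T Z_s\,dB_s,
\end{equation*}
square it, take expectations, and again invoke Burkholder and Lipschitz to bound $\mathbb{E}(K_T^2)$ in terms of $\xi$, $f^0$, $\sup_s S_s^+$ and the already-controlled $Y,Z$ norms. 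Applying Young's inequality with a small weight on the $\sup_s S_s^+\cdot K_T$ cross term lets me move the resulting $\tfrac12\mathbb{E}(K_T^2)$ to the left, breaking the circularity. Finally the BDG inequality upgrades the pointwise bound on $\mathbb{E}(Y_t^2)$ to the uniform bound $\mathbb{E}\sup_t Y_t^2$, and collecting all constants yields \eqref{we-04} with $C$ depending only on $C_1$ and $C_2T$.
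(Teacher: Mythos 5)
Your proposal is correct and follows essentially the same route as the paper's proof: Itô's formula on the square of $Y$ (the paper uses plain $Y_t^2$ plus Gronwall where you use the $e^{\alpha t}$ weight, an equivalent device), the flat-off condition to replace $\int Y_s\,dK_s$ by $\int S_s\,dK_s\le \sup_s S_s^{+}\cdot K_T$, Young's inequality with asymmetric weights to keep the $K_T^2$ coefficient small, isolation of $K_T$ from the equation at $t=0$ followed by squaring, the smallness of $C_2T$ (and a large Young weight) to break the circularity, and Burkholder--Davis--Gundy to pass to $\mathbb{E}\sup_t Y_t^2$. No gaps; this matches the paper's argument step for step.
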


\begin{proof}
Applying It\^{o}'s formula to $Y_{t}^{2}$, and taking expectation, then 
\begin{eqnarray*}
\mathbb{E}[Y_{t}^{2}+\int_{t}^{T}|Z_{s}|^{2}ds &=&\mathbb{E}[\xi
^{2}+2\int_{t}^{T}Y_{s}f(s,Y_{s},Z_{s},K_{s})ds+2\int_{t}^{T}Y_{s}dK_{s}] \\
&=&\mathbb{E}[\xi
^{2}+2\int_{t}^{T}Y_{s}f(s,Y_{s},Z_{s},K_{s})ds+2\int_{t}^{T}S_{s}dK_{s}]
\end{eqnarray*}%
as $\int_{0}^{T}(Y_{s}-S_{s})dK_{s}=0$. By (\ref{lip-01}), we have 
\begin{eqnarray*}
\mathbb{E}[Y_{t}^{2}+\int_{t}^{T}|Z_{s}|^{2}ds] &\leq &\mathbb{E}\left( \xi
^{2}+2\int_{t}^{T}S_{s}dK_{s}\right)  \\
&&+2\mathbb{E}\int_{t}^{T}|Y_{s}|\left( \left\vert f_{s}^{0}\right\vert
+C_{1}\left( \left\vert Y_{s}\right\vert +\left\vert Z_{s}\right\vert
\right) +C_{2}\left\vert K_{s}\right\vert \right) ds \\
&\leq &\mathbb{E}\left( \xi ^{2}+\int_{t}^{T}\left\vert f_{s}^{0}\right\vert
^{2}ds+\alpha (\sup_{0\leq t\leq T}S_{t}^{+})^{2}\right) +\mathbb{E}%
\int_{t}^{T}|Z_{s}|^{2}ds \\
&&+\left( 2C_{1}+C_{1}^{2}+2\right) \mathbb{E}\int_{t}^{T}\left\vert
Y_{s}\right\vert ^{2}ds+(C_{2}^{2}T+\frac{1}{\alpha })\mathbb{E}K_{T}^{2}%
\text{.}
\end{eqnarray*}%
By using Gronwell's inequality, we get 
\begin{eqnarray*}
\mathbb{E}[Y_{t}^{2}] &\leq &e^{\left( 2C_{1}+C_{1}^{2}+2\right) T}\mathbb{E}%
[\left( \xi ^{2}+\int_{t}^{T}\left\vert f_{s}^{0}\right\vert ^{2}ds+\alpha
(\sup_{0\leq t\leq T}S_{t}^{+})^{2}\right)  \\
&&+(C_{2}^{2}T+\frac{1}{\alpha })e^{\left( 2C_{1}+C_{1}^{2}+2\right) T}%
\mathbb{E}K_{T}^{2}
\end{eqnarray*}%
and 
\begin{equation*}
\mathbb{E}\int_{t}^{T}|Z_{s}|^{2}ds\leq C_{T}\mathbb{E}\left[ \xi
^{2}+\int_{t}^{T}\left\vert f_{s}^{0}\right\vert ^{2}ds+\alpha (\sup_{0\leq
t\leq T}S_{t}^{+})^{2}+(C_{2}^{2}T+\frac{1}{\alpha })K_{T}^{2}\right] 
\end{equation*}%
where 
\begin{equation*}
C_{T}=\left( 2C_{1}+C_{1}^{2}+2\right) e^{\left( 2C_{1}+C_{1}^{2}+2\right)
T}T+1\text{.}
\end{equation*}%
Since 
\begin{equation*}
\text{ \ \ \ }K_{T}=Y_{0}-\xi
-\int_{0}^{T}f(s,Y_{s},Z_{s},K_{s})ds+\int_{0}^{T}Z_{s}dB_{s}\text{ \ }
\end{equation*}%
so 
\begin{eqnarray*}
\mathbb{E}K_{T}^{2} &\leq &C\mathbb{E}\left( \xi ^{2}+\int_{t}^{T}\left\vert
f_{s}^{0}\right\vert ^{2}ds\right) +8C_{1}^{2}\mathbb{E}\int_{t}^{T}\left%
\vert Y_{s}\right\vert ^{2}ds \\
&&+(8C_{1}^{2}+8)\mathbb{E}\int_{t}^{T}|Z_{s}|^{2}ds+8C_{2}^{2}T\mathbb{E}%
K_{T}^{2} \\
&\leq &C\mathbb{E}\left( \xi ^{2}+\int_{t}^{T}\left\vert
f_{s}^{0}\right\vert ^{2}ds+\alpha (\sup_{0\leq t\leq
T}S_{t}^{+})^{2}\right)  \\
&&+\left( C_{4}C_{2}^{2}T^{2}+4C_{2}^{2}T+\frac{C_{4}}{\alpha }\right) 
\mathbb{E}K_{T}^{2}
\end{eqnarray*}%
where $C_{4}=\left( 8C_{1}^{2}+(8C_{1}^{2}+8)\left(
2C_{1}+C_{1}^{2}+2\right) \right) e^{\left( 2C_{1}+C_{1}^{2}+2\right) T}$
which is independent of $C_{2}$, so that if $C_{2}T$ small enough, and
choosing $\alpha $ big enough, we can ensure that $%
C_{4}C_{2}^{2}T^{2}+4C_{2}^{2}T+\frac{C_{4}}{\alpha }<1$, so that 
\begin{equation*}
\mathbb{E}[Y_{t}^{2}+\int_{0}^{T}|Z_{s}|^{2}ds+K_{T}]\leq C\mathbb{E}[\xi
^{2}+\int_{0}^{T}(f_{t}^{0})^{2}dt+(\sup_{0\leq t\leq T}S_{t}^{+})^{2}],
\end{equation*}%
with some constant $C$. (\ref{we-04}) follows by the use of the Burkholder
inequality.
\end{proof}

Now we consider the following continuous dependence theorem

\begin{theorem}
Under the same assumptions in Theorem \ref{ex-01}. Suppose $%
(Y^{i},Z^{i},K^{i})$, $(i=1,2)$ to be the solution of reflected BSDE (\ref%
{rbsde-ge}) with parameters $(\xi ^{i},f^{i},S^{i})$, respectively. Set 
\begin{eqnarray*}
\bigtriangleup Y &=&Y^{1}-Y^{2}\text{, }\bigtriangleup Z=Z^{1}-Z^{2}\text{, }%
\bigtriangleup K=K^{1}-K^{2}\text{,} \\
\bigtriangleup \xi &=&\xi ^{1}-\xi ^{2}\text{, }\bigtriangleup f=f^{1}-f^{2}%
\text{, }\bigtriangleup S=S^{1}-S^{2}\text{.}
\end{eqnarray*}%
Then 
\begin{eqnarray*}
&&\mathbb{E}\left( \sup_{0\leq t\leq T}\left\vert \bigtriangleup
Y_{t}\right\vert ^{2}+\int_{0}^{T}|\bigtriangleup Z_{s}|^{2}ds+\left\vert
\bigtriangleup K_{T}\right\vert \right) \\
&\leq &C\mathbb{E}\left( \bigtriangleup \xi ^{2}+\int_{0}^{T}\left\vert
\bigtriangleup f(t,Y_{t}^{1},Z_{t}^{1},K_{t}^{1})\right\vert ^{2}dt\right) \\
&&+C\Psi _{T}^{\frac{1}{2}}\left[ \mathbb{E}(\sup_{0\leq t\leq T}\left\vert
\bigtriangleup S_{t}\right\vert )^{2}]\right] ^{\frac{1}{2}}
\end{eqnarray*}%
where $C$ depends only on $C_{1}$ and $C_{2}T$, and 
\begin{eqnarray*}
\Psi _{T} &=&\mathbb{E}\left( |\xi ^{1}|^{2}+|\xi ^{2}|^{2}\right) +\mathbb{E%
}\left[ (\sup_{0\leq t\leq T}(S_{t}^{1})^{+})^{2}+(\sup_{0\leq t\leq
T}(S_{t}^{2})^{+})^{2}\right] \\
&&+\mathbb{E}\int_{0}^{T}(\left\vert f^{1}(t,0,0,0)\right\vert
^{2}+\left\vert f^{2}(t,0,0,0)\right\vert ^{2})dt\text{.}
\end{eqnarray*}
\end{theorem}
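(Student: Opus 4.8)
The plan is to obtain the estimate by the standard stability technique for reflected BSDEs: subtract the two equations, apply It\^{o}'s formula to the squared difference of the $Y$-components, and then control the cross term involving the two increasing processes by exploiting the flat-off conditions, exactly in the spirit of the a priori estimate (\ref{we-04}). First I would write the equation for the differences. Subtracting (\ref{rbsde-ge}) for $i=2$ from that for $i=1$ gives
\[
\bigtriangleup Y_{t}=\bigtriangleup \xi +\int_{t}^{T}\bigl(f^{1}(s,Y_{s}^{1},Z_{s}^{1},K_{s}^{1})-f^{2}(s,Y_{s}^{2},Z_{s}^{2},K_{s}^{2})\bigr)\,ds+(\bigtriangleup K_{T}-\bigtriangleup K_{t})-\int_{t}^{T}\bigtriangleup Z_{s}\,dB_{s}.
\]
Applying It\^{o}'s formula to $|\bigtriangleup Y_{t}|^{2}$ and taking expectations produces $\mathbb{E}|\bigtriangleup Y_{t}|^{2}+\mathbb{E}\int_{t}^{T}|\bigtriangleup Z_{s}|^{2}\,ds$ on the left, a driver cross term, and the increasing-process term $2\mathbb{E}\int_{t}^{T}\bigtriangleup Y_{s}\,d(\bigtriangleup K_{s})$.

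The driver cross term I would split as $f^{1}(s,Y_{s}^{1},Z_{s}^{1},K_{s}^{1})-f^{2}(s,Y_{s}^{2},Z_{s}^{2},K_{s}^{2})=\bigtriangleup f(s,Y_{s}^{1},Z_{s}^{1},K_{s}^{1})+\bigl(f^{2}(s,Y_{s}^{1},Z_{s}^{1},K_{s}^{1})-f^{2}(s,Y_{s}^{2},Z_{s}^{2},K_{s}^{2})\bigr)$, bounding the second piece by the Lipschitz condition (\ref{lip-01}) and absorbing the resulting $|\bigtriangleup Y|^{2}$ and $|\bigtriangleup Z|^{2}$ contributions via Young's inequality, while the $\bigtriangleup f$ piece feeds directly into the right-hand side of the asserted bound.

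The main obstacle is the increasing-process term, because the two barriers $S^{1},S^{2}$ differ and so the flat-off condition in (\ref{cons-02}) does not make it vanish. I would decompose $\bigtriangleup Y_{s}=(Y_{s}^{1}-S_{s}^{1})-(Y_{s}^{2}-S_{s}^{2})+\bigtriangleup S_{s}$. Using $\int(Y_{s}^{i}-S_{s}^{i})\,dK_{s}^{i}=0$ together with the sign constraints $Y^{i}\geq S^{i}$ and $K^{i}$ nondecreasing, the contributions of $\int_{t}^{T}\bigl[(Y_{s}^{1}-S_{s}^{1})-(Y_{s}^{2}-S_{s}^{2})\bigr]\,d(K_{s}^{1}-K_{s}^{2})$ are each either zero or nonpositive, leaving only $\int_{t}^{T}\bigtriangleup S_{s}\,d(\bigtriangleup K_{s})$. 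I would bound this by $\mathbb{E}\bigl[\sup_{s}|\bigtriangleup S_{s}|\,(K_{T}^{1}+K_{T}^{2})\bigr]$ and apply the Cauchy--Schwarz inequality, using the a priori estimate (\ref{we-04}) to control $\mathbb{E}(K_{T}^{1}+K_{T}^{2})^{2}$ by $\Psi_{T}$; this produces precisely the term $C\Psi_{T}^{1/2}[\mathbb{E}(\sup_{t}|\bigtriangleup S_{t}|)^{2}]^{1/2}$.

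Finally, with the smallness of $C_{2}T$ from Theorem \ref{ex-01} used to absorb the remaining $K$-dependent contribution, Gronwall's inequality yields the bounds on $\mathbb{E}|\bigtriangleup Y_{t}|^{2}$ and $\mathbb{E}\int_{0}^{T}|\bigtriangleup Z_{s}|^{2}\,ds$ with a constant $C$ depending only on $C_{1}$ and $C_{2}T$. The supremum $\mathbb{E}\sup_{t}|\bigtriangleup Y_{t}|^{2}$ then follows by applying the Burkholder inequality to the martingale part $\int_{0}^{\cdot}\bigtriangleup Z_{s}\,dB_{s}$, and the bound on $\mathbb{E}|\bigtriangleup K_{T}|$ comes from solving the difference equation at $t=0$ for $\bigtriangleup K_{T}$ and inserting the estimates already obtained.
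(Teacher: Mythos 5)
Your proposal is correct and follows essentially the same route as the paper: It\^{o}'s formula applied to $|\bigtriangleup Y|^{2}$, the observation that $\int_{t}^{T}(\bigtriangleup Y_{s}-\bigtriangleup S_{s})\,d(\bigtriangleup K_{s})\leq 0$ via the flat-off conditions and sign constraints, bounding the residual barrier term by $\mathbb{E}\bigl[\sup_{t}|\bigtriangleup S_{t}|\,(K_{T}^{1}+K_{T}^{2})\bigr]$ with Cauchy--Schwarz and the a priori estimate (\ref{we-04}), then Gronwall, smallness of $C_{2}T$, and Burkholder. The paper compresses the final steps into a reference to the preceding proposition, but your explicit treatment (including recovering $\bigtriangleup K_{T}$ from the equation at $t=0$) is exactly what that reference entails.
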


\begin{proof}
Applying It\^{o}'s formula to $\left\vert \bigtriangleup Y_{t}\right\vert
^{2}$, then taking expectation, and using the fact that $\int_{t}^{T}(%
\bigtriangleup Y_{s}-\bigtriangleup S_{s})d(\bigtriangleup K_{s})\leq 0$, we
get 
\begin{eqnarray*}
&&\mathbb{E}\left( \left\vert \bigtriangleup Y_{t}\right\vert
^{2}+\int_{t}^{T}|\bigtriangleup Z_{s}|^{2}ds\right)  \\
&=&\mathbb{E}\left( \bigtriangleup \xi ^{2}+2\int_{t}^{T}\bigtriangleup
Y_{s}\bigtriangleup f(s,Y_{s}^{1},Z_{s}^{1},K_{s}^{1})ds\right)  \\
&&+2\mathbb{E}\left( \int_{t}^{T}\bigtriangleup
Y_{s}(f^{2}(s,Y_{s}^{1},Z_{s}^{1},K_{s}^{1})-f^{2}(s,Y_{s}^{2},Z_{s}^{2},K_{s}^{2}))ds\right) 
\\
&&+2\mathbb{E}\int_{t}^{T}\bigtriangleup S_{s}d(\bigtriangleup K_{s})\text{.}
\end{eqnarray*}%
By (\ref{lip-01}), we get 
\begin{eqnarray*}
\mathbb{E}\left\vert \bigtriangleup Y_{t}\right\vert ^{2} &\leq &\mathbb{E}%
\left( \bigtriangleup \xi ^{2}+\int_{t}^{T}\left\vert \bigtriangleup
f(s,Y_{s}^{1},Z_{s}^{1},K_{s}^{1})\right\vert ^{2}ds\right)  \\
&&+(2+2C_{1}+C_{1}^{2})\mathbb{E}\int_{t}^{T}\left\vert \bigtriangleup
Y_{s}\right\vert ^{2}ds+C_{2}^{2}T\mathbb{E}[\bigtriangleup K_{T}^{2}] \\
&&+\mathbb{E}\sup_{0\leq t\leq T}\left( \left\vert \bigtriangleup
S_{t}\right\vert (K_{T}^{1}+K_{T}^{2})\right) \text{.}
\end{eqnarray*}%
Now we are in the same situation as in the previous proposition, with
similar arguments, as the Lipschitz constant $C_{2}$ is small, the result
thus follows immediately.
\end{proof}

It follows from the continuous dependence result, the solution of reflected
BSDE (\ref{rbsde-ge})-(\ref{cons-02}) is unique.

\bigskip

\noindent {\textbf{Acknowledgements.}} This research was carried out while
the second author was visiting the Mathematics Institute, Oxford, and the
Oxford-Man Institute, partially supported by EPSRC\ grant EP/F029578/1 and
an LMS scheme 2 grant. The second author is partly supported by NSF Youth
Grant 10901154/A0110.

\bigskip

\bibliographystyle{amsplain}
\bibliography{RBSDE-Qian-Xu1}
%%\bibliography{2010qian}

\noindent {\small \textsc{Zhongmin Qian}, Mathematical Institute, University
of Oxford, Oxford OX1 3LB, England}

\noindent {\small {Email: \texttt{qianz@maths.ox.ac.uk}}}

\vskip0.2truecm

\noindent {\small \textsc{Mingyu Xu}, Key Laboratory of Random Complex
Structures and Data Science, Academy of Mathematics \& Systems Science, CAS,
Beijing }

\noindent {\small {Email: \texttt{xumy@amss.ac.cn}}}

\end{document}